\documentclass[12pt]{amsart}
\usepackage{geometry}
\usepackage{mathtools}
\usepackage{setspace}
\usepackage{hyperref}
\usepackage{amsmath, amssymb, amsthm}
\newcommand{\SET}{\mathrm{\mathbf{Set}}}
\usepackage{todonotes}
\newcommand{\GSET}{{\mathrm{\mathbf{G}}\mbox{-}\mathrm{\mathbf{Set}}}}

\newcommand{\xGSET}{(\mathrm{\mathbf{G}}\text{-}\mathrm{\mathbf{Set}})_{\mathbb{S}}}
\newcommand{\XGSET}{\mathbf{G}\text{-}\mathbb{XS}}
\usepackage[all,ps]{xy}
\newtheorem{thm}{Theorem}[section]
\newtheorem{prop}[thm]{Proposition}

\newtheorem{defi}[thm]{Definition}
\newtheorem{lem}[thm]{Lemma}
\newtheorem{ques}[thm]{Question}

\newcommand{\rC}{\mathrm C}
\newcommand{\rD}{\mathrm D}

\newcommand{\rF}{\mathrm F}
\newcommand{\rG}{\mathrm G}

\newcommand{\rK}{\mathrm K} 
\newcommand{\rM}{\mathrm M}
\newcommand{\rN}{\mathrm N}
\newcommand{\rQ}{\mathrm Q}

\newcommand{\rS}{\mathrm S}
\newcommand{\rT}{\mathrm T}
\newcommand{\rU}{\mathrm U}
\newcommand{\rV}{\mathrm V}

\newcommand{\co}{\colon}

\newcommand{\mc}{\mathcal}

\newcommand{\bC}{\mathbb C}

\newcommand{\bQ}{\mathbb Q}

\newcommand{\bS}{\mathbb S}
\newcommand{\bT}{\mathbb T}

\usepackage[numbers]{natbib}

\author{John Boiquaye}
\address{Department of Mathematics, 
	University of Ghana, 
	Botanical \newline \indent Gardens Road,
	Legon, GA-489-9348, 
	P.O. Box LG 62, Accra, 
	Ghana}
\email{jboiquaye@ug.edu.gh}

\author{Ralph Agyei Twum}
\address{Department of Mathematics, 
	University of Ghana, 
	Botanical \newline \indent Gardens Road,
	Legon, GA-489-9348, 
	P.O. Box LG 62, Accra, 
	Ghana}
\email{ratwum@ug.edu.gh}

\title{Duplicial Sets, crossd $G$-sets and Descent categories}

\begin{document}
	\maketitle
\begin{abstract}
The goal of this note is to show that the left $\chi$-coalgebra, which is an additional structure on one of the coefficients used in the construction of the cyclic operator for the cyclic sets that generalises the twisted nerve of a group by Loday is an instance of a general theory of left $\chi$-coalgebras. It is also shown that in the general case, the resulting cyclic operator  requires that the other coefficient needs to be equipped with a  crossed $G$-sets structure.	\end{abstract}	

\section{Introduction}

This paper is sequel to an earlier paper \cite{Boi25} where the first author studied the question ``when does a simplicial object that computes the simplicial homology of an algebraic structure carry a duplicial structure?". The motivating example is that of a canonical simplicial $\mathbb{F}$-module $C(U, M, N)$ which has as $n$-simplices the elements of $M \otimes U^{\otimes n} \otimes N$ and whose homology is $\mathrm{Tor}^{U/\mathbb{F}}(M, N)$ where $U$ is an associative unital $\mathbb{F}$-algebra and $M, N$ being right respectively left $U$-modules \cite[Section 8.7.1]{Weibel}.
This example has been well studied by many when $U$ is a Hopf algebra. From the perspective of the results of B\"ohm and \c Stefan \cite{BS08}, the author in \cite{Boi25} studied the above question in the category $\SET$ of sets where the Hopf algebra $U$ is just a group $G$ and $M, N$ are right respectively left $G$-sets. This led to the classification of all duplicial structures on the simplicial set $C(G, \{*\}, N)$ obtained from the bar resolution $\tilde{\rM}B_*(\bT, \rN) : \mathbf{1} \rightarrow \SET$ of the functor $\rN : \mathbf{1} \rightarrow \GSET$ (so that $\rN$ picks out an a $G$-set $N$) where $\bT \coloneq \rF\rU$ is the comonad on the category $\GSET$ of $G$-sets induced by the free-forgetful adjunction $\rF\coloneq \rG\tilde{\times} - : \SET \rightleftarrows \GSET: \rU $ and $\tilde{\rM} \colon \GSET \rightarrow \SET$ is a functor that sends any $G$-set $X$ to the set of its orbits $X/G$. In addition, the first author classified the  additional structures $\rho : \rT \rN \Rightarrow \rS \rN$ on the functor $\rN$ known as right $\chi$-coalgebra  where $\chi : \rT\rS \Rightarrow \rS\rT$ is a comonad distributive law \cite{beck}. The additional structures $\lambda : \tilde{\rM}S \Rightarrow \tilde{\rM}T$ on $\tilde{\rM}$ known as left $\chi$-coalgebras were also classified. These additional structures serve as coefficients in cyclic homology theory \cite{KKS15}.  Furthermore the twisted nerve of a group by Loday \cite{Loday} was generalized and shown to arise from the B\"ohm-\c Stefan construction \cite{BS08}.

 Kowalzig, Kr\"ahmer and Slevin in \cite[Proposition 3.5]{KKS15}, studied the above question from the perspective of B\"ohm and \c Stefan \cite{BS08} and gave a complete description of the right $\chi$-coalgebras $\rho$ on $\rN$. The results obtained by the first author in \cite[Proposition 5.4]{Boi25} is an instance of \cite[Proposition 3.5]{KKS15}. 
 However the description for left $\chi$-coalgebras $\lambda$ was not made explicit in \cite{KKS15} and although the first author classified these left $\chi$-coalgebra structures in the above mentioned setting in \cite[Proposition 5.5]{Boi25}, no description of these structures were provided. The aim of this paper is to give such analogous description of these structures. 
 
 In order to put our questions into perspective we recall the original setting in \cite{KKS15}. Assume 
 $\rF\co \mc A \rightleftarrows
 \mc B \co \rU$ is an adjunction for 
 $\bT$, meaning that 
 $\rT = \rF\rU$
 (see~\cite[Main
 Application 8.6.2]{Weibel} ). 
 Assume further that $\bS$ is a lift of a comonad 
 $\bC$ on~$\mc A$ through this
 adjunction. Then there is a  
 canonical distributive law 
 $ \chi \colon \rT \rS
 \Rightarrow \rS \rT$ which in the 
 
 case of an
 Eilenberg--Moore adjunction, 
  yields a
 bijective correspondence
 between lifts of comonads and
 distributive laws. 
 	\cite[Theorem
 	2.4]{Wol73}.  

 In \cite{BBK25} the author with two other coauthors provided an analogous description of left $\chi$-coalgebra structures on
 a functor $\rN \co \mc B \to
 \mc Z$. This description is provided in the following result:
 \begin{thm} \label{helpful} Considering the above setting, assume that
 	\begin{enumerate}
 		\item $\rV\co \mathcal{C} \rightleftarrows  \mathcal{B} \co \rG$ is an adjunction for the 
 		comonad~$\mathbb{S}$,
 		\item there is a comonad $\mathbb{Q}$ on~$\mc C$ which extends the comonad
 		$\mathbb{T}$ through the adjunction, and
 		\item the mate of the natural isomorphism~$\tilde \Omega \co \rQ \rG \Rightarrow \rG \rT$ which implements the extension is a lax isomorphism of comonads. 
 	\end{enumerate}
 	Then
 	\begin{enumerate}
 		\item the induced distributive law $\psi \co \bS\bT \Rightarrow \bT\bS$ is an isomorphism and 
 		\item left~$\psi^{-1}$-coalgebra structures~$(\rN, \lambda)$ on~$\mc Z$ correspond 
 		bijectively to~$\mathbb{Q}$-opcoalgebra structures on~$\rN\rV$.
 	\end{enumerate}
 \end{thm}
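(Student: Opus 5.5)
We argue in two stages, following the pattern of \cite[Proposition 3.5]{KKS15} but dualised. First we make the induced distributive law explicit. Since $\rS = \rV\rG$ and $\tilde\Omega\co \rQ\rG \Rightarrow \rG\rT$ is a natural isomorphism implementing the extension, its mate is a natural transformation
\[
\Omega \co \rV\rQ \Rightarrow \rT\rV, \qquad \Omega = \rT\rV\varepsilon^{\rV\rG}\circ \rV\tilde\Omega\rV \circ \eta\,\rV\rQ
\]
(with $\eta,\varepsilon$ the unit and counit of $\rV\dashv\rG$, suitably oriented). The distributive law is then
\[
\psi \co \rS\rT = \rV\rG\rT \xRightarrow{\rV\tilde\Omega^{-1}} \rV\rQ\rG \xRightarrow{\Omega\rG} \rT\rV\rG = \rT\rS .
\]
We will verify directly that this is the distributive law induced by the extension data, and that it is compatible with the comonad structures, using the axioms expressing that $\tilde\Omega$ commutes with the counits and comultiplications of $\bQ$ and $\bT$.

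For part (1), the first factor $\rV\tilde\Omega^{-1}$ is invertible by hypothesis. Hypothesis (3) says precisely that $\Omega$ is invertible as a natural transformation, and its inverse is again a morphism of comonads in the lax sense. Hence $\Omega\rG$ is invertible, and so is $\psi$. We will also record that $\psi^{-1} = \rV\tilde\Omega\circ\Omega^{-1}\rG$. This expression is what the second part uses, and it shows that $\psi^{-1}\co \rT\rS\Rightarrow\rS\rT$ is again a distributive law, so left $\psi^{-1}$-coalgebras make sense.

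For part (2), recall that a left $\psi^{-1}$-coalgebra structure on $\rN\co\mc B\to\mc Z$ is a natural transformation $\lambda\co\rN\rS\Rightarrow\rN\rT$ satisfying two compatibilities. The first is with the counits: $\rN\varepsilon^{\rT}\circ\lambda=\rN\varepsilon^{\rS}$. The second is a pentagon-type condition relating $\rN\delta^{\rT}$, $\rN\delta^{\rS}$ and $\psi^{-1}$. A $\bQ$-opcoalgebra structure on $\rN\rV$ is a map $\theta\co \rN\rV \Rightarrow \rN\rV\rQ$ (or the dual variance) that is coassociative and counital with respect to $\bQ$. The correspondence is built by mates along $\rV\dashv\rG$ combined with $\Omega$:
\[
\theta \mapsto \lambda \coloneq \rN\Omega\rG\circ \theta\rG \circ (\text{unit/counit whiskered by } \rN) ,
\]
and conversely $\lambda \mapsto \theta$, using $\Omega^{-1}$ and the triangle identities. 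Bijectivity of this assignment on the underlying natural transformations follows from the standard mate correspondence together with the invertibility of $\Omega$.

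The main obstacle is checking that the coassociativity axiom for $\theta$ translates exactly into the $\psi^{-1}$-pentagon for $\lambda$. Our approach is to expand $\delta^{\rS} = \rV\eta\rG$ and rewrite $\psi^{-1}$ through $\Omega^{-1}$. We then use the lax-comonad-morphism axiom
\[
\rT\Omega\circ\Omega\rQ\circ\rV\delta^{\rQ} = \delta^{\rT}\rV\circ\Omega
\]
to move the comultiplication of $\bQ$ past $\Omega$, and finish with the triangle identities to cancel unit–counit pairs. Naturality squares should settle the counit conditions directly.
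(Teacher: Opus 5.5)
The paper does not prove this theorem; it quotes it from \cite{BBK25} and only applies it. The nearest argument in the paper is the theorem of Section 4. There, for $\rN=\tilde{\rM}$ and $\rG=\rF^{\bS}$, it uses exactly your two assignments, $\tilde{\rM}\tilde\nabla=\tilde{\rM}\tilde\Lambda^{-1}\circ\lambda\rV\circ\tilde{\rM}\rV\eta$ and $\lambda=\tilde{\rM}\rT\varepsilon\circ\tilde{\rM}\tilde\Lambda\rF^{\bS}\circ\tilde{\rM}\tilde\nabla\rF^{\bS}$. It checks the axioms elementwise in a case where the mate $\tilde\Lambda$ is the identity (because $\bar a=1$), so the correspondence there is just the adjunction bijection. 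Your route is the general, abstract one; your $\Omega$ is the paper's $\tilde\Lambda$, and the paper reserves $\Omega$ for $\rC\rU\Rightarrow\rU\rS$. It uses the comonad-morphism axioms of the mate instead of element chasing, and it shows where invertibility of the mate is needed: to define $\psi^{-1}$ and $\lambda\mapsto\theta$, and to make the translation of axioms reversible. The outline is correct, but it needs the following repairs to become a proof.

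\textbf{Whiskering.} Several whiskerings are on the wrong side and do not typecheck. The mate is $\tilde\Lambda=\varepsilon\rT\rV\circ\rV\tilde\Omega\rV\circ\rV\rQ\eta$. In your assignment the counit must be applied last: $\lambda=\rN\rT\varepsilon\circ\rN\tilde\Lambda\rG\circ\theta\rG$, with inverse $\theta=\rN\tilde\Lambda^{-1}\circ\lambda\rV\circ\rN\rV\eta$.

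\textbf{Closing the pentagon.} Two identities make your computation go through. The first is $\lambda\rS\circ\rN\delta^{\rS}=\rN\tilde\Lambda\rG\circ\theta\rG$, from naturality of $\theta$ and $\tilde\Lambda$ together with $\varepsilon\rV\circ\rV\eta=\mathrm{id}$; it cancels the factor $\tilde\Lambda^{-1}\rG$ in $\psi^{-1}$. The second is the mate relation $\rT\varepsilon\circ\tilde\Lambda\rG=\varepsilon\rT\circ\rV\tilde\Omega$. With these, naturality and the comultiplication axiom for $\tilde\Lambda$, the two sides of the pentagon become $\Phi\circ(\rN\rV\delta^{\rQ}\circ\theta)\rG$ and $\Phi\circ(\theta\rQ\circ\theta)\rG$, where $\Phi=\rN\rT\rT\varepsilon\circ\rN\rT\tilde\Lambda\rG\circ\rN\tilde\Lambda\rQ\rG$. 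This proves that coassociativity of $\theta$ implies the pentagon for $\lambda$.

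\textbf{The converse.} Your sketch leaves implicit that the pentagon for $\lambda$ implies coassociativity of $\theta$. For this you need that $\sigma\mapsto\Phi\circ\sigma\rG$ is injective. It is, because $\rT\tilde\Lambda\circ\tilde\Lambda\rQ$ is invertible and $\tau\mapsto\rN\rT\rT\varepsilon\circ\tau\rG$ is the adjunction bijection, with inverse $\kappa\mapsto\kappa\rV\circ\rN\rV\eta$.

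\textbf{Counit conditions.} Naturality alone does not settle these. You need the counit axiom $\varepsilon^{\rT}\rV\circ\tilde\Lambda=\rV\varepsilon^{\rQ}$, which gives $\rN\varepsilon^{\rT}\circ\lambda=\rN\varepsilon\circ(\rN\rV\varepsilon^{\rQ}\circ\theta)\rG$. The same injectivity argument then gives the reverse implication.
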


As mentioned above,  a classification result for left $\chi$-coalgebras was provided in \cite[Proposition 5.5]{Boi25}, which we capture below: 
	\begin{prop}\label{pro5} Let  $\tilde{\rM} : \GSET
		\rightarrow \SET$ be the
		functor which takes any
		$G$-set $X$ to the set of
		orbits in $X$, 
		\begin{align*} \tilde{\rM}X:=  X/G. \end{align*}
		\begin{enumerate} \item Let $h
			: L \rightarrow G$ be a
			$G$-equivariant map and let
			$\tilde{\lambda}_N : L \times
			N \rightarrow G\tilde{\times} N$ be a
			map defined by
			$\tilde{\lambda}_N(b, n) =
			(h(b), h(b)^{-1}n).$ Then
			$\tilde{\lambda}_N$ is a
			$G$-equivariant map.  \item
			The natural transformation $$\lambda_N : (L \times
			N)/G \rightarrow (G\tilde{\times}
			N)/G \quad ([b,n]) \mapsto
			[\tilde{\lambda}_N(b, n)]$$ 
			is a left
			$\chi$-coalgebra.
	\end{enumerate}
\end{prop}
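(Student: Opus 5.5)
The plan is to verify the statement directly from the definitions, in the order in which it is stated. I will fix the conventions used in \cite{Boi25}. The free $G$-set $G\tilde{\times} N$ carries the action $c\cdot(g,n)=(cg,n)$, with counit $\varepsilon^{\bT}(g,n)=gn$ and comultiplication $\delta^{\bT}(g,n)=(g,(e,n))$. The comonad $\bS$ is $L\times -$, with $L$ a $G$-set and diagonal action on $L\times N$, counit the projection $\varepsilon^{\bS}(b,n)=n$ and comultiplication $\delta^{\bS}(b,n)=(b,(b,n))$. The distributive law is $\chi(g,(b,n))=(gb,(g,n))$.

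For part (1), $G$-equivariance is a one-line computation using only $h(cb)=c\,h(b)$:
\[
\tilde\lambda_N(cb,cn)=(c\,h(b),\,h(b)^{-1}c^{-1}cn)=c\cdot(h(b),h(b)^{-1}n).
\]
In the same step I will check naturality of $\tilde\lambda$ in $N$. For a $G$-map $f\colon N\to N'$, $(\mathrm{id}\times f)\tilde\lambda_N(b,n)=(h(b),f(h(b)^{-1}n))$, and this equals $(h(b),h(b)^{-1}f(n))$ by equivariance of $f$.

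For part (2), equivariance and naturality of $\tilde\lambda$ imply at once that $\lambda_N$ is well defined on orbits and is natural, since $\tilde{\rM}$ is a functor. What remains are the two axioms of a left $\chi$-coalgebra $\lambda\colon\tilde{\rM}\rS\Rightarrow\tilde{\rM}\rT$. I will check both on representatives $(b,n)$ and then pass to orbits.

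The counit axiom $\tilde{\rM}\varepsilon^{\bT}\circ\lambda=\tilde{\rM}\varepsilon^{\bS}$ is immediate:
\[
\varepsilon^{\bT}\tilde\lambda_N(b,n)=h(b)h(b)^{-1}n=n=\varepsilon^{\bS}(b,n).
\]

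The comultiplication axiom is the main obstacle. It has the shape
\[
\tilde{\rM}\delta^{\bT}\circ\lambda=\lambda_{\rT}\circ\tilde{\rM}\chi\circ\lambda_{\rS}\circ\tilde{\rM}\delta^{\bS},
\]
possibly with $\chi$ and the whiskerings placed slightly differently depending on the convention of \cite{KKS15}. I will first write this pentagon out explicitly as maps $(L\times N)/G\to (G\tilde\times(G\tilde\times N))/G$. I will then trace $[b,n]$ along both paths. The left side gives $[h(b),(e,h(b)^{-1}n)]$.

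On the right side, $\delta^{\bS}$ gives $(b,(b,n))$. Applying $\tilde\lambda$ gives $(h(b),\,h(b)^{-1}(b,n))=(h(b),(h(b)^{-1}b,\,h(b)^{-1}n))$. Then $\chi$ gives $(b,(h(b),h(b)^{-1}n))$. Finally $\tilde\lambda_{\rT}$ gives $(h(b),\,h(b)^{-1}(h(b),h(b)^{-1}n))=(h(b),(e,h(b)^{-1}n))$, which matches the left side.

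The delicate point is to use equivariance of $h$ only where it is needed, and to check that each intermediate expression is taken modulo the $G$-action at the correct stage, since $\tilde{\rM}$ kills only the outermost action. If the conventions for $\chi$ or $\delta^{\bT}$ in \cite{Boi25} differ, I will redo this trace with those formulas. The identity $h(b)^{-1}h(b)=e$ should still close the diagram.
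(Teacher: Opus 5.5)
Your proposal is correct. Your conventions match the paper's: $\tilde\Delta_N(g,n)=(g,1,n)$, $\tilde\varepsilon_N(g,n)=gn$, $\chi_N(g,l,x)=(gl,g,x)$, the diagonal action on $L\times N$, and $G$ acting on $G\tilde{\times}N$ through the first factor only. You read equivariance of $h$ as $h(cb)=c\,h(b)$, which is exactly what part (1) needs. The pentagon you write is the only type-correct one for $\lambda\colon\tilde{\rM}\rS\Rightarrow\tilde{\rM}\rT$ and $\chi\colon\rT\rS\Rightarrow\rS\rT$, and both of your traces do end at $(h(b),1,h(b)^{-1}n)$.

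The paper gives no proof of this statement. It quotes the result from \cite[Proposition 5.5]{Boi25} and later uses it only as an input: the converse half of its main theorem ends with ``which is a left $\chi$-coalgebra by \cite[Proposition 5.5]{Boi25}''. So your direct verification supplies the argument the paper relies on; there is no competing route in the paper to compare it with.

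One simplification is available. Your computation already shows that $\tilde\lambda\colon\rS\Rightarrow\rT$ satisfies both axioms on the nose in $\GSET$, namely $\tilde\varepsilon\circ\tilde\lambda=\varepsilon$ and $\tilde\Delta\circ\tilde\lambda=\tilde\lambda\rT\circ\chi\circ\tilde\lambda\rS\circ\Delta$. The axioms for $\lambda=\tilde{\rM}\tilde\lambda$ then follow just by whiskering with the functor $\tilde{\rM}$. Your remarks about passing to orbits at the correct stage, and your hedges about possibly differing conventions, can therefore be dropped.
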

	It is therefore natural to ask
	\begin{ques}
		Do the left $\chi$-coalgebras in Proposition \ref{pro5} apply to Theorem \ref{helpful}?
	\end{ques}
	\begin{ques}\label{q2}
		With reference to this example can we give a classification result for left $\chi$-coalgebras using Theorem \ref{helpful}? 
		If the answer to the above Question  is in the affirmative, what is the description of duplicial sets from this example, and when are these duplicial sets cyclic?  

	\end{ques}
	The content of our computation can be summarised as follows:
	\begin{thm}
	Let  $\tilde{\rM} : \GSET
	\rightarrow \SET$ be the
	functor which takes any
	$G$-set $X$ to the set of
	orbits in $X$, 
	$\tilde{\rM}X:=  X/G.$
	Let $N$ be a $G$-set and $h
	: L \rightarrow G$ be any
	$G$-equivariant map and let
	$\tilde{\lambda}_N : L \times
	N \rightarrow G\tilde{\times} N$ be the
	$G$-equivariant map defined by
	$\tilde{\lambda}_N(b, n) =
	(h(b), h(b)^{-1}n).$ 
	Then the left $\chi$-coalgebra $$\lambda_N : (L \times
	N)/G \rightarrow (G\tilde{\times}
	N)/G \quad ([b,n]) \mapsto
	[\tilde{\lambda}_N(b, n)]$$ 
	corresponds bijectively
	to $\bQ$-opcoalgebra structures on $\tilde{\rM}\rV$.
	\end{thm}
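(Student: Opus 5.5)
The approach is to apply Theorem~\ref{helpful} with $\mathcal A=\SET$, $\mathcal B=\GSET$, $\mathcal Z=\SET$, $\rF=\rG\tilde\times -\dashv\rU$, $\bT=\rF\rU$, and to read off the statement from part (2) of that theorem with $\rN$ replaced by $\tilde{\rM}$. For this we first need the remaining data.

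\begin{enumerate}
\item \emph{Choose $\bS$ and its adjunction.} We take $\bS$ to be the comonad $\rS=L\times -$ on $\GSET$, with the diagonal $G$-action. Its comultiplication is $(b,x)\mapsto(b,b,x)$ and its counit is the projection. This comonad lifts the comonad $L\times-$ on $\SET$ through $\rF\dashv\rU$. We realise $\bS$ by the adjunction $\rV\co\mathcal C\rightleftarrows\GSET\co\rG$, where $\mathcal C=\GSET/L$ is the slice category of $G$-sets over $L$. Here $\rV$ forgets the map to $L$, and $\rG X=(L\times X\to L)$. This is the usual coKleisli/Eilenberg--Moore picture for $L\times-$.
\item \emph{Construct $\bQ$ on $\mathcal C$.} For an object $p\co Y\to L$, we set $\rQ(Y,p)=(G\tilde\times Y,\ (g,y)\mapsto g\,p(y))$. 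The comonad structure is transported from $\bT$. We then verify the isomorphism $\tilde\Omega\co\rQ\rG\cong\rG\rT$, given explicitly by $(g,b,x)\mapsto(gb,g,x)$, and check that its mate is a lax isomorphism of comonads. This is a direct diagram check in $\GSET$.
\item \emph{Conclude from Theorem~\ref{helpful}.} The theorem now gives $\psi$ invertible. It also identifies $\psi^{-1}$ with the distributive law $\chi$ used in Proposition~\ref{pro5}; this identification has to be verified by computing $\psi$ explicitly. Part (2) of the theorem then says that left $\chi$-coalgebras on $\tilde{\rM}$ correspond bijectively to $\bQ$-opcoalgebra structures $\tilde{\rM}\rV\Rightarrow\tilde{\rM}\rV\rQ$.
\end{enumerate}

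It then remains to check that the $\lambda_N$ of Proposition~\ref{pro5} is exactly what this correspondence produces. A $\bQ$-opcoalgebra structure sends $[y]\in Y/G$ to an orbit in $(G\tilde\times Y)/G$. Naturality, together with the counit and coassociativity axioms, should force the formula $[y]\mapsto[(h(p(y)),h(p(y))^{-1}y)]$ for a $G$-equivariant $h\co L\to G$, with $G$ acting on itself by conjugation or left translation as in \cite{Boi25}. Under the correspondence, $Y=L\times N$ and $p=\mathrm{pr}_L$, and this gives exactly $\lambda_N([b,n])=[(h(b),h(b)^{-1}n)]$.

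The main obstacle is step (2), together with the matching of $\psi^{-1}$ with $\chi$. The difficulty is choosing the action on $\rQ$ so that the mate condition holds and the resulting $\chi$ coincides with the one in \cite{Boi25}. I would first try the twisted action $g\,p(y)$ given above. If that fails, I would instead use the action $p(y)$ and compensate in $\tilde\Omega$.
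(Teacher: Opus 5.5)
Your plan is essentially the paper's proof: $\GSET/L$ is isomorphic to the Eilenberg--Moore category $\xGSET$, since an $\bS$-coalgebra $\beta$ is determined by the equivariant map $\beta_1$; your $\rQ$ and $\tilde\Omega$ are exactly the paper's with $\bar a=1$; the mate $\tilde\Lambda$ is then the identity; and $\psi^{-1}=\chi$ because $\chi^{-1}$ is the unique distributive law $\bS\bT\Rightarrow\bT\bS$, so the twisted action $g\,p(y)$ works and no fallback is needed. The step you leave as ``should force'' is true (a Yoneda argument gives it) but you do not prove it, and you do not need it: as in the paper, push $\lambda$ through $\tilde{\rM}\tilde\Lambda^{-1}\circ\lambda\rV\circ\tilde{\rM}\rV\eta$ to get $[y]\mapsto[(h(p(y)),h(p(y))^{-1}y)]$, and push an opcoalgebra $\nabla$ back through $\tilde{\rM}\rT\varepsilon\circ\tilde{\rM}\tilde\Lambda\rG\circ\tilde{\rM}\nabla\rG$ to recover $\lambda_N$, noting that $h$ must be equivariant for left translation on $G$, since with conjugation neither formula is well defined on orbits.
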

	\begin{thm}
		Assume that $N$ is a $G$-set and $L$ is a faithful $G$-set. The duplicial operator 
		$
		t : \bar{C}(G, \{*\}, N) \rightarrow \bar{C}(G, \{*\}, N) 
		$ such that for $n \in \mathbb{N}$
		$$ t_n([*, g_1, \ldots, g_{n+1}, w]) = [*, h(u), (h(u))^{-1}(g_1, \ldots, g_n), g_{n+1}w,]
		$$
		where $u = g_1\cdot\ldots\cdot g_{n+1}f(w)$
		is cyclic if and only if $(h\circ f(w))^{-1} \in \mathrm{Stab}(w)$ and $(h\circ f(gw))^{-1} = g(h\circ f(w))^{-1}g^{-1}$. 
	\end{thm}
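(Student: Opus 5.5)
Cyclicity here means $t_n^{n+1}=\mathrm{id}$ for every $n$. So the plan is to compute the iterates of $t_n$ on a representative $[*, g_1,\ldots,g_{n+1},w]$ of $\bar{C}(G,\{*\},N)$ and read off when the $(n+1)$-st iterate returns the original class. The whole computation should reduce to the degree $0$ case together with the equivariance of $h\circ f$.

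\textbf{Degree $0$.} Here $t_0([*,g_1,w]) = [*, h(g_1 f(w)), g_1 w]$. To get one formula covering all iterates, I would introduce
$$\theta(w) := (h\circ f(w))^{-1}.$$
Writing $u=g_1\cdots g_{n+1}f(w)$, I would use the $G$-equivariance of $h$ (and of $f$, as in the construction of $t$ from Proposition \ref{pro5}) to rewrite $h(u)$ in terms of $g_1\cdots g_{n+1}$ and $h\circ f(w)$. The condition $t_0=\mathrm{id}$ on orbits will then say that $h\circ f(w)$, acting on $w$ through the orbit identification, fixes $w$. That is exactly $\theta(w)\in\mathrm{Stab}(w)$.

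\textbf{General degree.} I would prove by induction on $k$ a closed formula for $t_n^k$. The first $n$ group entries get conjugated successively, and the last slot accumulates the product $g_{n-k+2}\cdots g_{n+1}w$. After $n+1$ steps the last slot becomes $g_1\cdots g_{n+1}w$. The group coordinates become conjugates of the $g_i$ by expressions of the form $h\circ f(g\,w)$ with $g$ a partial product. Requiring this to equal $[*,g_1,\ldots,g_{n+1},w]$ modulo the diagonal $G$-action gives two things:
\begin{itemize}
\item a twisting condition $\theta(gw)=g\,\theta(w)\,g^{-1}$, so that the conjugating elements telescope;
\item the stabiliser condition, so that the residual element acts trivially on $w$.
\end{itemize}
Conversely, assuming both conditions, substituting them into the closed formula collapses $t_n^{n+1}$ to the identity. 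Faithfulness of $L$ is used to pass from equalities after applying $h$ (or of actions) back to equalities of group elements, which gives necessity.

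\textbf{Main obstacle.} The hardest step will be the bookkeeping in the induction for $t_n^k$, in particular identifying which partial product $g$ appears in $h\circ f(g w)$ at each stage, so that the telescoping is visible. I would first do $n=1$ by hand, computing $t_1^2$ explicitly, to guess the pattern and confirm that exactly the two stated conditions appear. I would then verify the inductive step using only the $G$-equivariance of $h$ and the orbit relation $[x]=[gx]$.
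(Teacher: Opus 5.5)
Your overall route is the paper's: necessity is read off from $t_1^2=\mathrm{id}$, and sufficiency comes from computing $t_n^{n+1}$ and letting the twisting condition telescope the group coordinates. Several specific claims in your plan are false, though, and would derail the computation as written.

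First, $f\colon N\to L$ is only a set map; it enters through $\rho(g,w)=(gf(w),gw)$. Only $h$ is equivariant, and that alone gives $h(u)=g_1\cdots g_{n+1}\,h(f(w))$. You cannot use ``equivariance of $h\circ f$''. It would give $\theta(gw)=\theta(w)g^{-1}$, and together with the twisting condition $\theta(gw)=g\theta(w)g^{-1}$ this forces $g=1$. So the twisting condition is one of the conclusions, and it is incompatible with ordinary equivariance of $h\circ f$ unless $G$ is trivial.

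Second, your degree-$0$ formula (the displayed one read literally at $n=0$) is not well defined. $G$ moves only the first group slot, so $[*,g_1,w]=[*,1,w]$, while the $N$-slot $g_1w$ of your output depends on $g_1$. In $t_n=\lambda\circ\chi^n\circ\rho$ the factor $h(u)^{-1}$ acts on the whole tail $(g_1,\ldots,g_n,g_{n+1}w)$, which for $n=0$ is the $N$-slot. So $t_0$ is $w\mapsto\theta(w)w$. In any case degree $0$ is not needed.

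Third, after $n+1$ steps the last slot is $h(u_{n+1})^{-1}g_1\cdot g_2\cdots g_{n+1}w=\theta(w)w$, not $g_1\cdots g_{n+1}w$. Since the action touches only the first group slot, nothing is absorbed ``modulo the action'': after (\ref{identi}) you compare coordinates in $G^{\times n}\times N$ directly.

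Concretely, in the coordinates $(x_1,\ldots,x_n,w)=(g_2,\ldots,g_{n+1},w)$ of (\ref{identi}) one has $t_n(x_1,\ldots,x_n,w)=(\theta(w)(x_1\cdots x_n)^{-1},x_1,\ldots,x_{n-1},x_nw)$. Induction on the number of iterations gives
\[
t_n^{\,n+1}(x_1,\ldots,x_n,w)=\bigl(\theta(y_1)\,x_1\,\theta(y_2)^{-1},\ \ldots,\ \theta(y_n)\,x_n\,\theta(y_{n+1})^{-1},\ \theta(w)w\bigr),\qquad y_i=x_i\cdots x_nw,\ y_{n+1}=w .
\]
Since $x_i\in G$ and $y_{i+1}\in N$ can be chosen independently, $t_n^{n+1}=\mathrm{id}$ is equivalent to the two stated conditions. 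This already holds for $n=1$, which is how the paper gets necessity, and the converse is exactly the paper's telescoping.

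Finally, faithfulness of $L$ plays no role. The conditions are read off as equalities in $G$ and in $N$ without ever cancelling $h$, and the paper's proof never uses it. In fact an equivariant $h\colon L\to G$ already makes the action on $L$ free.
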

	\section{Preliminaries and notation}\label{como}
	
	\subsection{The category $\GSET$}
	Throughout this paper, $G$
	denotes a (not necessarily
	finite) group (in the
	category $\SET$ of sets) with 
	unit element $1$. 
	By a $G$-set $N$, we mean a set
	with a left action 
	$ \theta \colon G \times N
	\rightarrow N $ that we
	usually just write as 
	concatenation, $
	gn := \theta(g,n) $. If $M$ is a set with a
	right action of $G$, we
	consider it also as a $G$-set 
	by setting $gm := m g^{-1}$, 
	$g \in G,m \in M$. The
	$G$-sets form a category 
	$\GSET$ 

	whose
	morphisms $L \rightarrow Y$ are
	the $G$-equivariant maps 
	$$ 
	\mathrm{Hom}_{\GSET}(L,Y) := 
	\{ f \colon L \rightarrow Y
	\mid 
	\forall  g \in G, x \in L :
	f(gx) = gf(x) 
	\}.
	$$
	
	\subsection{$\GSET$ is Cartesian}\label{comonads}
	The category $\GSET$ is
	Cartesian, i.e.~is a symmetric
	monoidal category in which
	every object is in a unique
	way a cocommutative comonoid.
	Concretely this means that
	given any $G$-sets $L,N$, the Cartesian
	product $L \times
	N$ of sets becomes a $G$-set with
	respect to the diagonal action
	$$
	g(l,n) := (gl,gn),
	$$
	and for a fixed $L$ this
	operation extends to an
	endofunctor $\rS := L \times -$ 
	on $\GSET$; furthermore, this
	endofunctor carries a unique
	(up to isomorphism) comonad
	structure which is given by
	the natural maps 
	$$
	\Delta _N \colon 
	L \times N \rightarrow 
	L \times L \times N,\quad
	(l,n) \mapsto 
	(l,l,n),
	$$ 
	$$
	\varepsilon_N \colon 
	L \times N \rightarrow N,
	\quad
	(l,n) \mapsto n.
	$$
	Throughout, we denote this comonad by
	$\mathbb{S}=(S,\Delta ,\varepsilon )$.
	
	\subsection{The comonad $\mathbb{T}$}\label{comonadt}
	Given any set $W$ and any $G$-set $L$, we can construct
	the $G$-set $\rF W:=L \tilde \times W$
	with action given by 
	$g(a,w):=(ga,w)$. 
	This extends in the obvious
	way to a functor 
	$\rF \colon \SET \rightarrow
	\GSET$. 
	When $L=G$, this 
	is left adjoint
	to the forgetful functor 
	$\rU \colon \GSET \rightarrow
	\SET$ that forgets the action
	of $G$. Hence we have a natural bijection 
	\begin{equation}
		\begin{aligned}\label{madina}
			\theta : \mathrm{Hom}_{\GSET}(\rF \rU N, \rS N) & \rightarrow \mathrm{Hom}_{\SET}(\rU N, \rU\rS N) 
			\\
			\theta(\rho)(n)&:= \rho(1, n) 
		\end{aligned}
	\end{equation}
	which has as inverse the map 
	\begin{equation}
		\begin{aligned}\label{santo}
			\xi : \mathrm{Hom}_{\SET}(\rU N, \rU\rS N) &\rightarrow  \mathrm{Hom}_{\GSET}(\rF \rU N, \rS N)\\
			\xi(\bar{\rho})(a, n)&: = a\bar{\rho}(n).
		\end{aligned}
	\end{equation}

	Now the composition 
	$\rT:=\rF\rU$ becomes (part of) a comonad 
	$\mathbb{T}=(\rT,\tilde \Delta ,\tilde \varepsilon )$ 
	on 
	$\GSET$ whose structure maps
	are given by
	$$
	\tilde{\Delta}_N \colon
	\rF N
	\rightarrow \rF\rF N
	,\quad
	(g,n) \mapsto 
	(g,1,n),
	$$	
	$$
	\tilde{\varepsilon}_N \colon
	\rF N \rightarrow N,
	\quad
	(g,n) \mapsto gn.
	$$
	
	\subsection{The category $\XGSET$}

	By a crossed $G$-set \cite{FreydYetter, kassel} $X$ we mean a pair $(X, \alpha^X)$ where $X$ is a $G$-set, and $\alpha^X : X \rightarrow G$ is set map such that for $g \in G$ and $x \in X$, $\alpha^X(gx) = g \alpha^X(x)g^{-1}$.
	Given two crossed $G$-sets $(Y, \alpha^Y)$ and $(X,\alpha^X)$, a map $f \colon Y \rightarrow X$ of crossed $G$-sets is a $G$-equivariant map  such that $\alpha^X \circ f = \alpha^Y$.  Crossed $G$-sets together with morphisms between crossed $G$-sets form a category which we will denote by $\XGSET$ \cite{FreydYetter}. 
	\subsection{Examples of crossed $G$-sets}
	It is well known that every $G$-set $X$ can be turned into crossed $G$-set by defining a map $\alpha^X : X \rightarrow G$ such that $\alpha^X(x) = 1$ for every $x \in X$. However if $X$ is a trivial $G$-set, then by setting $\alpha^X(x) \in C(G)$, where $C(G)$ is the center of $G$, we make the trivial $G$-set $X$, into a crossed $G$-set.
	In particular if $X = G$ as a $G$-set with action given by conjugation, then $G$ is a crossed $G$-set where $\alpha^G(g) = g$ for every $g \in G$, that is, $\alpha^G = \mathrm{id}$. However if the action is given by the group product then $G$ is a crossed $G$-set with $\alpha(g) = 1$ for every $g \in G$; that is, $\alpha$ is the trivial map \cite{kassel,Yoshida, FreydYetter}.
	\subsection{$\XGSET$ is braided monoidal}
	The category of crossed $G$-sets is a braided monoidal category \cite{FreydYetter}, in which every object is a unique comonoid. Given any crossed $G$-sets $Y, X$, the product of $X$ and $Y$ is given by the cartesian product~$X \times Y$. The product $X \times Y$ is turned into a crossed $G$-set with respect to the diagonal action
	$$
	g(x, y):= (gx, gy)
	$$ of $G$ and the map $$\alpha^{X \times Y} (x, y) = \alpha^X(x)\alpha^Y(y).$$
	As a braided category, the braiding $\sigma_{X, Y}: X \times Y \rightarrow Y\times X$ is given by~$$\sigma_{X, Y}(x, y) = (y, (\alpha^Y(y))x).$$

	\subsection{The category $\xGSET$}
	Here we discuss coalgebras over the comonad $\mathbb{S}$, discussed in Section \ref{comonads}. These are pairs $(X, \beta)$ where $X$ is a $G$-set and $\beta : X \rightarrow \rS X$ a map satisfying the following equations:
	$$
	\varepsilon \circ \beta = \mathrm{id}, \quad \Delta Y \circ \beta = \rS\beta \circ \beta
	$$
	from which we obtain the condition that $\beta(x) = (\beta_1(x), x)$ for any $x \in X$, where $\beta_1 : X \rightarrow L$ is a $G$-equivariant map. Thus $\beta$ is determined by $\beta_1$. Given $(Y, \alpha)$  another coalgebra over the comonad $\mathbb{S}$, a morphism $h : (X, \beta) \rightarrow (Y, \alpha)$ is a $G$-equivariant map $h: X \rightarrow Y$ satisfying the following equations:
	$$
	\rS h \circ \beta = \alpha \circ h.
	$$
	Coalgebras over the comonad $\mathbb{S}$ together with morphisms between them form a category which we denote by $\xGSET$. As $\xGSET$ is a an Eilenberg-Moore category, we know that it stems from and Eilenberg-Moore adjunction with the co-free functor $\rF^{\mathbb{S}}$ being right adjoint to the forgetful functor $V$ where $\mathrm{F}^{\mathbb{S}} : \GSET \rightarrow \xGSET$, $ X \mapsto (\rS X, \Delta)$ and $ \rV : \xGSET \rightarrow \GSET$, $(Y, \alpha : Y \rightarrow \rS Y ) \mapsto Y$.  Explicitly, we have a natural isomorphism 
	\begin{align*}
	\xi : \mathrm{Hom}_{\xGSET}((Y, \alpha), \mathrm{F}^{\mathbb S}(X)) &\rightarrow \mathrm{Hom}_{\GSET} (\rV (Y, \alpha), X) \\	
	\xi(\mathfrak{f})(y)&:= (\varepsilon\circ \mathfrak{f})(y)
	\end{align*}
	
	with inverse 
	\begin{align*}
		\Theta : \mathrm{Hom}_{\GSET}(\rV (Y, \alpha), X) &\rightarrow \mathrm{Hom}_{\xGSET}((Y, \alpha), \mathrm{F}^{\mathbb{S}}(X))\\
		& \Theta(\mathfrak{h})(y):= (\rF^{\mathbb{S}}(h) \circ \alpha) (y).
		\end{align*}
	The unit  $\eta^{\rF^{\mathbb{S}}\rV}_{(Y, \alpha)}: (Y, \alpha) \rightarrow \mathrm{F}^\mathbb{S}\rV (Y, \alpha)$ of the adjunction is given by the $\mathbb{S}$-coalgebra structure map $\alpha$,  while the counit $\varepsilon^{\rV\rF^{\mathbb{S}}}_X : \rV\mathrm{F}^{\mathbb{S}}X \rightarrow X$ of the adjunction is given by the counit $\varepsilon$ of the comonad $\mathbb{S}$.

	\section{Adjunctions, lifts and Distributive laws}
Here we recall some definitions that we will need in our main result. 
Suppose that $\rF\colon \mathcal{A} \rightleftarrows \mathcal{B} \colon \rU
	$ is an adjunction with unit $\eta$ and counit $\varepsilon$. 
	
	\begin{defi}
		A lift of an endofunctor $\rC$ through the adjunction $\rF \dashv \rU$ is an endofunctor $\rS$ together with a natural isomorphism $ \Omega : \rC\rU \Rightarrow \rU\rS$. In such a setup, $\rC$ is also referred to as the extension of $\rS$ through the adjunction and the following diagram
		$$
		\xymatrix{ \mathcal{B} \ar[d]_{\rU}\ar[r]^{\rS} & \mathcal{B} \ar[d]^{\rU} \\ \mathcal{A} \ar[r]_{\rC} & \mathcal{A}}
		$$
		commutes up to the natural isomorphism $\Omega : \rC\rU \Rightarrow \rU\rS$. 
		\end{defi}
		Such a natural transformation   $\Omega : \rC\rU \Rightarrow \rU\rS$ uniquely determines  another natural transformation $\Lambda : \rF\rC \Rightarrow \rS\rF$ which is its mate where $\Lambda = \varepsilon \rS\rF \circ \rF \Omega \rF \circ \rF \eta$. As pointed out in \cite[Section 2.6,~ Example 2.18 ]{BBK25} $\Lambda$ is not in general invertible even when $\Omega$ is invertible.

	We now consider the following setup in \cite{KKS15} with $\mathcal{A}\coloneq \SET$,  $\mathcal{B}\coloneq \GSET$ and 
	$\mathcal{C}\coloneq \xGSET$ with adjunctions 
	$\rF \coloneq G\tilde{\times} - : \SET \rightleftarrows \GSET \colon \mathrm{U}$ and $ \rV\colon \xGSET \rightleftarrows \GSET :\rF^{\mathbb{S}} $. The adjunction $\rF \coloneq G\tilde{\times} - : \SET \rightleftarrows \GSET \colon \mathrm{U}$  induces a comonad $\mathbb{T}$ (see Section \ref{comonadt}) and taking $C$ to be a comonad on $\SET$ with its lift the comonad $S$ (see Section \ref{comonads}) we have the lower half of the diagram below:
	$$
		\xymatrix{\xGSET \ar[rr]^\rQ\ar@/_1.2pc/[dd]_{\rV} &  & \xGSET \ar@/_1.2pc/[dd]_{\rV}\\ \\
				\GSET\ar@/^1.2pc/[dd]^{\rU }\ar@{}[dd]|{\dashv} \ar@/_1.2pc/[uu]_{\rF^{\mathbb{S}}}\ar@{}[uu]|{\dashv} \ar@<0.4ex>[rr]^{\rT} \ar@<-0.4ex>[rr]_{\rS = L \times -} & & \GSET \ar@/^1.2pc/[dd]^{\rU}\ar@{}[dd]|{\dashv} \ar@/_1.2pc/[uu]_{\rF^{\mathbb{S}}} \ar@{}[uu]|{\dashv}\\ \\
				\SET\ar@/^1.2pc/[uu]^{\rG\tilde{\times}-} \ar[rr]_\rC & & \SET \ar@/^1.2pc/[uu]^{\rG\tilde{\times}-}
			} 
		$$
		
		The lower half of the above diagram was indirectly considered in \cite{Boi25}. We concentrate our discussion on the upper half of the diagram. 
	In what follows we recall the comonad distributive law in \cite[Section 5, Proposition 5.3]{Boi25} and show that it is invertible. Let $\mathbb{T, S}$ be the comonads in Section \ref{como}.
		The natural transformation 
	$ \chi : \rT\rS \Rightarrow \rS\rT$ given by
	$\chi_N: G\tilde{\times}L\times X \rightarrow
	L\times G\tilde{\times} X \quad (g, l, x) \mapsto (gl, g, x)$ 
	is the unique distributive law between the comonads  
	$\mathbb{T}$ and $\mathbb{S}$.
	\begin{lem}
	Let $X$ be any $G$-set. Then the map $$\chi_X : G\tilde{\times}L \times X \rightarrow L \times G\tilde{\times}X \quad (g, l, x) \mapsto (gl, g, x)$$ is invertible with inverse map given by 
	$$\tilde{\chi}_X: L\times G\tilde{\times} X \rightarrow G\tilde{\times}L \times X \quad (l,g,x) \mapsto (g, g^{-1}l, x).
	$$
	Furthermore the inverse map is $G$-equivariant. 
		\end{lem}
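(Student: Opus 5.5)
To prove the lemma I will verify directly that $\chi_X$ and $\tilde{\chi}_X$ compose to the identity in both orders, and then check that $\tilde{\chi}_X$ respects the $G$-actions. Throughout I use the conventions of Section~\ref{como}. On $G\tilde{\times}L\times X = \rT\rS X$ the action is on the first (free) coordinate only, $k(g,l,x)=(kg,l,x)$. On $L\times G\tilde{\times}X=\rS\rT X$ the action is diagonal on the Cartesian product $L\times(G\tilde{\times}X)$, with $G$ acting on $G\tilde{\times}X$ through its first coordinate. So $k(l,g,x)=(kl,kg,x)$.

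\emph{Step 1: the two composites.} For $(g,l,x)\in G\tilde{\times}L\times X$ we have
$$\tilde{\chi}_X(\chi_X(g,l,x))=\tilde{\chi}_X(gl,g,x)=(g,g^{-1}gl,x)=(g,l,x).$$
Conversely, for $(l,g,x)\in L\times G\tilde{\times}X$,
$$\chi_X(\tilde{\chi}_X(l,g,x))=\chi_X(g,g^{-1}l,x)=(gg^{-1}l,g,x)=(l,g,x).$$
These computations use only the group action axioms $g(g^{-1}l)=l$ and $g^{-1}(gl)=l$ on $L$. Hence $\chi_X$ is a bijection with inverse $\tilde{\chi}_X$.

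\emph{Step 2: equivariance.} For $k\in G$,
$$\tilde{\chi}_X(k(l,g,x))=\tilde{\chi}_X(kl,kg,x)=(kg,(kg)^{-1}kl,x)=(kg,g^{-1}l,x)=k\,\tilde{\chi}_X(l,g,x).$$
The key identity is $(kg)^{-1}k=g^{-1}$. One could instead argue abstractly that the inverse of an equivariant bijection is automatically equivariant, since $\chi_X$ is a morphism in $\GSET$. I will record the direct check anyway, because it confirms that the action conventions on the two sides match.

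\emph{Main obstacle.} The only delicate point is fixing the correct $G$-actions on $\rT\rS X$ and $\rS\rT X$: free on the first factor in one case, diagonal in the other. With the wrong convention (for instance acting on $L$ in $G\tilde{\times}L\times X$), the equivariance check would fail. Once the conventions are pinned down as above, every step is a one-line computation. Naturality in $X$ of $\tilde{\chi}$ also follows at once, because $X$ is carried along unchanged.
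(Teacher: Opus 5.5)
Your proof is correct and follows the paper's argument, which also checks directly that both $\tilde{\chi}_X\circ\chi_X$ and $\chi_X\circ\tilde{\chi}_X$ are identities. The paper calls the $G$-equivariance of $\tilde{\chi}_X$ a ``straightforward computation''; your explicit check, using the actions $k(g,l,x)=(kg,l,x)$ on $G\tilde{\times}L\times X$ and $k(l,g,x)=(kl,kg,x)$ on $L\times G\tilde{\times}X$, is exactly that computation.
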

	\begin{proof}
	Since 
	\begin{eqnarray*}\tilde{\chi}_X\circ \chi_X(g, l, x) &=&\tilde{\chi}_X(gl, g, x)\\ &= & (g, g^{-1}gl, x)= \mathrm{Id}_{G\tilde{\times} L \times X}(g, l,x)
	\end{eqnarray*}
and 
\begin{eqnarray*}
	\chi_X\circ \tilde{\chi}_X(l, g, x) &=&\chi_X(g, g^{-1}l, x) \\&=&(gg^{-1}l,g, x) = \mathrm{Id}_{L \times G\tilde{\times} X}(g, l, x)
\end{eqnarray*}
it implies that $\chi_X$ is invertible. That $\tilde{\chi}_X$ is $G$-equivariant is a straightforward computation. 
	\end{proof}
	
\begin{prop}
The natural transformation $\tilde{\chi}: \rS\rT \Rightarrow \rT\rS$ is the unique distributive law between the comonads $\mathbb{S}$ and $\mathbb{T}$.	
	\end{prop}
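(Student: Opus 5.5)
We have to show two things: that $\tilde{\chi}\colon \rS\rT \Rightarrow \rT\rS$ is a comonad distributive law, and that it is the only one. For the first part we use the general fact that the inverse of an invertible distributive law $\chi\colon \rT\rS \Rightarrow \rS\rT$ is again a distributive law, with the roles of the comonads swapped. By the preceding lemma, $\tilde{\chi}_X$ is $G$-equivariant and inverse to $\chi_X$. Naturality of $\tilde{\chi}$ in $X$ follows at once from naturality of $\chi$, since inverses of natural isomorphisms are natural.

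The four axioms for $\tilde{\chi}$ are
\begin{align*}
\rT\varepsilon \circ \tilde{\chi} &= \varepsilon \rT, &
\tilde{\varepsilon}\rS \circ \tilde{\chi} &= \rS\tilde{\varepsilon},\\
\rT\Delta \circ \tilde{\chi} &= \tilde{\chi}\rS \circ \rS\tilde{\chi} \circ \Delta\rT, &
\tilde{\Delta}\rS \circ \tilde{\chi} &= \rT\tilde{\chi} \circ \tilde{\chi}\rT \circ \rS\tilde{\Delta}.
\end{align*}
Each one is obtained from the corresponding axiom for $\chi$ (stated in \cite[Proposition 5.3]{Boi25}) by composing on the left and right with $\chi^{-1}=\tilde{\chi}$, or with whiskered copies of it. As a sanity check we also verify them elementwise. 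For instance, the first two read $(g, g^{-1}l, x)\mapsto (g,x)$ and $(g,g^{-1}l,x)\mapsto (l, g, x)$ on the left-hand sides, while the right-hand sides give $(l,g,x)\mapsto (g,x)$ and $(l,g,x)\mapsto (l,gx)$. Here one must be careful about how $\tilde\varepsilon$ acts on $G\tilde{\times}(L\times X)$: it gives $g(g^{-1}l,x)=(l,gx)$, which matches. The comultiplication axioms reduce to checking that $(l,g,x)$ is sent to $(g,1,g^{-1}l,x)$ by both sides of the last identity, and to $(g,g^{-1}l,g^{-1}l,x)$ by both sides of the third. These are direct substitutions.

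For uniqueness, let $\phi\colon \rS\rT \Rightarrow \rT\rS$ be any distributive law. Write $\phi_X(l,g,x) = (a, l', x')$, where $a\in G$, $l'\in L$ and $x'\in X$ are a priori functions of $(l,g,x)$. We will pin these down as follows.
\begin{itemize}
\item The counit axiom $\rT\varepsilon\circ\phi=\varepsilon\rT$ gives $a=g$ and $x'=x$.
\item The axiom $\tilde{\varepsilon}\rS\circ\phi=\rS\tilde{\varepsilon}$ then gives $(g l', g x) = (l, gx)$, so $l' = g^{-1}l$.
\end{itemize}
Hence $\phi=\tilde{\chi}$.

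The main obstacle is making sure the counit equations really determine every component. In particular, $G\tilde{\times}(L\times X)$ is identified with $G\tilde{\times}L\times X$ via $\rU$, so the third coordinate of $\phi_X(l,g,x)$ must be fixed by $\rT\varepsilon$, not only up to the action. If the identifications cause trouble, the fallback is the duality argument: any $\phi$ satisfying the axioms is invertible with inverse a distributive law $\rT\rS\Rightarrow\rS\rT$. That inverse must equal $\chi$ by the uniqueness of $\chi$ stated above, and so $\phi=\chi^{-1}=\tilde{\chi}$.
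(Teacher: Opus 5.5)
Your proof is correct and follows the paper. Uniqueness uses exactly the same two counit axioms: $\rT\varepsilon\circ\phi=\varepsilon\rT$ fixes $a=g$ and $x'=x$, and then $\tilde{\varepsilon}\rS\circ\phi=\rS\tilde{\varepsilon}$ fixes $l'=g^{-1}l$. For existence, your elementwise check is what the paper leaves as ``straightforward'', and transferring the axioms from $\chi$ to $\chi^{-1}$ is a valid extra justification. The fallback is not needed, and as stated it is unjustified, since an arbitrary distributive law $\rS\rT\Rightarrow\rT\rS$ need not be invertible a priori.
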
	
	\begin{proof}
	It is straightforward to show that $\tilde{\chi}$ is a distributive law, i.e, that the following diagrams 
	$$
	\xymatrix{L \times G\tilde{\times}
		X  \ar[d]_{\tilde{\chi}_X}
		\ar[r]^{L\tilde{\times} \tilde{\Delta}_X \;\;\;\;}&
		L\times G \tilde{\times} G\tilde{\times}
		X\ar[r]^{\tilde{\chi}_{G\tilde{\times} X}}  &
		G\tilde{\times} L \times G\tilde{\times}
		X\ar[d]^{\;\;G\tilde{\times} \tilde{\chi}_X}
		\\ G \tilde{\times} L \times
		X\ar[rr]_{\tilde{\Delta}_{L\times
				X}} & & G\tilde{\times} G\tilde{\times} L
			\times X } \qquad
	\xymatrix{L\times G\tilde{\times} X
		\ar[r]^{\tilde{\chi_X}}\ar[dr]_{L\times
			\tilde{\varepsilon}_X} & G\tilde{\times}L \times
		X\ar[d]^{\tilde{\varepsilon}_{L
					\times X}} \\ &  L \times X}
	$$
	
	 $$
	 \xymatrix{ L\times G\tilde{\times}
	 X  \ar[d]_{\tilde{\chi}_X}
		\ar[r]^{\Delta_{G\tilde{\times
				X}\;\;\;\;\;\;}}  & L\times L \times G\tilde{
		\times} X\ar[r]^{L\times
			\tilde{\chi}_X}  & L\times G\tilde{\times}
		L \times X\ar[d]^{\;\; \tilde{\chi}_{L
				\times X}} \\ G\tilde{\times} L
		\times X\ar[rr]_{G\tilde{\times}\Delta_X} & & G\tilde{\times}
		L \times L \times X } \qquad
	\xymatrix{L\times G \tilde{\times} X
		\ar[r]^{\tilde{\chi}_X}\ar[dr]_{\varepsilon_{G\tilde{
				\times} X}} & G \tilde{\times} L \times
		X\ar[d]^{G \tilde{\times}
			\varepsilon_X} \\ &  G\tilde{
		\times X.}} 
	$$
		commute. Assume now that $\tilde{\chi}_X : L \times G\tilde{\times}X \rightarrow G\tilde{\times} L \times X$ is any distributive law between the comonads $\mathbb{S}$ and $\mathbb{T}$, define $\beta, \alpha, \xi$ by $\tilde{\chi}_X(l,g, x)\eqcolon (\beta(l, g, x), \alpha(l, g, x), \xi(l, g, x))$. We obtain from the first counit compatibility condition that
		$$
		\tilde{\varepsilon}_{L \times X}\circ \tilde{\chi}_X(l, g, x) = L\times \tilde{\varepsilon}(l, g, x)
		$$ 
		So that 
		$$ (\beta(l, g,x)\alpha(l,g,x), \beta(l, g, x)\xi(l,g,x)) = (l, gx).$$ 
		Thus $\beta(l,g,x)\alpha(l,g,x) = l$ and $\beta(l,g,x)\xi(l,g,x) = gx$. From the second counit compatibility condition we have 
		$$
		G \tilde{\times}\varepsilon_X\circ \tilde{\chi}(l,g,x) = \varepsilon_{G\tilde{\times} X}(l,g,x).
		$$
		That is 
		$$
		(g, x) = (\beta(l,g,x), \xi(l, g,x)).
		$$
		Therefore $\beta(l, g,x) = g$, $\xi(l,g,x) = x$ and $\alpha(l,g,x)= g^{-1}l$.
	\end{proof}
	
	\subsection{Extension of the comonad $\mathbb{T}$}
	Here we assume that the comonad $\mathbb{T}$ is a lift of a comonad $\mathbb{Q} = (\rQ, \Delta^{\mathbb{Q}}, \varepsilon^{\mathbb{Q}})$, or $\mathbb{Q}$ is an extension of $\mathbb{T}$ through the adjunction $ \rV\colon \xGSET \rightleftarrows \GSET :\rF^{\mathbb{S}} $ and we write out explicitly what $\mathbb{Q}$ should be. That is we assume we have a natural isomorphism $\tilde{\Omega} :  \rQ\rF^{\mathbb{S}} \Rightarrow \rF^{\mathbb{S}}\rT$. 
	\begin{prop}
		Let $\chi : \rT\rS \Rightarrow \rS\rT$ be the comonad distributive law between the comonads $\mathbb{T}$ and $\mathbb{S}$. If $\rQ: \xGSET \rightarrow \xGSET$ is an endofunctor on $\xGSET$ such that for any $\mathbb{S}$-coalgebra pair $(X, \beta),$ 
		$
		\rQ(X, \beta) = (G\tilde{\times}X, \chi \circ G\tilde{\times}\beta)
		$ then $\rQ$ is part of a comonad $\bQ$ with natural transformations $\Delta^{\mathbb{Q}}\colon \rQ \Rightarrow \rQ\rQ$ with
		$$\Delta^\bQ_{(X, \beta)}\colon (G\tilde{\times}X, ~\chi\circ G\tilde{\times} \beta) \rightarrow (G\tilde{\times}G\tilde{\times}X, ~\chi_{G\tilde{\times}X} \circ G \tilde{\times} \chi_X \circ G \tilde{\times}G\tilde{\times}\beta)
		$$
		$$
		\Delta^\bQ_{(X, \beta)}(g, x) = \tilde{\Delta}_X(g,x) = (g, 1, x)
		$$
		and $\varepsilon^\bQ : \rQ \Rightarrow \mathrm{Id}_{\xGSET}$
	with
	$\varepsilon^\bQ_{(X, \beta)}: (G\tilde{\times}X, \chi \circ G\tilde{\times}\beta) \rightarrow (X, \beta)
	$ defined by  $$\varepsilon^\bQ{(X, \beta)}(g, x) = \tilde{\varepsilon}_X(g, x) = gx$$.
		\end{prop}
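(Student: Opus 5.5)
Checking that $\bQ$ is a comonad reduces to three things: $\rQ$ is a well-defined endofunctor of $\xGSET$, the maps $\Delta^{\bQ}_{(X,\beta)}$ and $\varepsilon^{\bQ}_{(X,\beta)}$ are morphisms of $\mathbb{S}$-coalgebras, and the comonad axioms and naturality hold. The last part comes for free, because on underlying $G$-sets $\Delta^{\bQ}$ and $\varepsilon^{\bQ}$ are just $\tilde{\Delta}$ and $\tilde{\varepsilon}$. The forgetful functor $\rV$ is faithful, so naturality and the coassociativity and counit identities for $\bQ$ follow from the corresponding identities for $\mathbb{T}$ in Section \ref{comonadt}.

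\textbf{Step 1: $\rQ$ is well defined.} Write $\beta(x)=(\beta_1(x),x)$ with $\beta_1\colon X\to L$ equivariant. The proposed structure map is
$$(g,x)\mapsto \chi_X(g,\beta_1(x),x)=(g\beta_1(x),g,x),$$
so it has the form $(\gamma_1(g,x),(g,x))$ with $\gamma_1(g,x)=g\beta_1(x)$. This $\gamma_1$ is $G$-equivariant for the action $h(g,x)=(hg,x)$. By the description of $\mathbb{S}$-coalgebras in the subsection on $\xGSET$, a map of this form is an $\mathbb{S}$-coalgebra structure. Alternatively, this is the standard fact that a distributive law $\chi\colon\rT\rS\Rightarrow\rS\rT$ lifts $\rT$ to $\rS$-coalgebras. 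On a morphism $f$ we set $\rQ f=G\tilde{\times}f$; it is a coalgebra map by naturality of $\chi$.

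\textbf{Step 2: $\varepsilon^{\bQ}$ and $\Delta^{\bQ}$ are coalgebra maps.} For $\varepsilon^{\bQ}$ we need $\beta_1(gx)=g\beta_1(x)$, which is exactly the equivariance of $\beta_1$.

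For $\Delta^{\bQ}$, first compute the structure on $\rQ\rQ(X,\beta)$. Applying the formula of Step 1 twice gives the $L$-component $(g,g',x)\mapsto gg'\beta_1(x)$. This agrees with the composite $\chi_{G\tilde{\times}X}\circ G\tilde{\times}\chi_X\circ G\tilde{\times}G\tilde{\times}\beta$ in the statement. Evaluated at $\tilde{\Delta}_X(g,x)=(g,1,x)$ it gives $g\beta_1(x)$, which is the $L$-component of the structure on $\rQ(X,\beta)$ at $(g,x)$. So $\rS\tilde{\Delta}_X\circ(\text{structure})=(\text{structure})\circ\tilde{\Delta}_X$. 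Equivariance of $\tilde{\Delta}_X$ is already known.

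\textbf{Step 3: comonad axioms.} These are inherited as explained at the start, since $\rV$ is faithful and $\rV\bQ$ is $\mathbb{T}$ on underlying $G$-sets.

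The main obstacle is bookkeeping rather than a conceptual difficulty. One must keep track of which copy of $G$ the element $1$ inserted by $\tilde{\Delta}$ sits in, and confirm that the iterated structure map on $\rQ\rQ$ is $\chi_{G\tilde{\times}X}\circ G\tilde{\times}\chi_X\circ\cdots$ and not the other order. I would verify this by evaluating both composites on a generic element $(g,g',x)$, as in Step 2.
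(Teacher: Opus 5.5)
Your proposal is correct and follows the paper's proof. In both, the comonad axioms are inherited from $\mathbb{T}$ because $\Delta^{\bQ}$ and $\varepsilon^{\bQ}$ are just $\tilde{\Delta}$ and $\tilde{\varepsilon}$ on underlying $G$-sets, and the real content is the elementwise check that both maps are $\mathbb{S}$-coalgebra morphisms, giving $(g\beta_1(x),g,1,x)$ on both sides, respectively $(g\beta_1(x),gx)$ via equivariance of $\beta_1$. Your extra verifications are that $\rQ$ is well defined on objects and morphisms, and that naturality transfers along the faithful functor $\rV$. The paper skips these; for $\rQ$ it builds them into the hypothesis. They only make your argument more complete.
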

	\begin{proof}
	That $\Delta^\bQ$ is coassociative is straightforward as $\Delta^\bQ_{(X, \beta)} = \tilde\Delta_X$. It remains to show that $\Delta^{\bQ}_{(X,\beta)}$ is a morphism of $\mathbb{S}$-coalgebras, that is 
	$$
	L \times \Delta^\bQ_{(X, \beta)} \circ \chi_X\circ G\tilde{\times}\beta = \chi_{G\tilde{\times}X} \circ G\tilde{\times}\chi_X \circ G \tilde{\times}G\tilde{\times}\beta \circ \Delta^\bQ_{(X, \beta)}.
	$$	
	Now 
	\begin{eqnarray*}L \times \Delta^\bQ_{(X, \beta)} \circ \chi_X \circ G\tilde{\times}\beta(g, x)&=& L \times \Delta^\bQ_{(X, \beta)}\circ \chi_X(g, \beta_1(x), x)\\ &=& L \times \Delta^\bQ_{(X, \beta})(g\beta_1( x), g, x) \\&=& (g\beta_1(x), g, 1, x) 
\end{eqnarray*}
On the other hand, 
\begin{eqnarray*}
	\chi_{G\tilde{\times}X} \circ G\tilde{\times}\chi_X \circ G \tilde{\times}G\tilde{\times}\beta \circ \Delta^\bQ_{(X, \beta)}(g, x) &=& \chi_{G\tilde{\times}X} \circ G\tilde{\times}\chi_X \circ G \tilde{\times}G\tilde{\times}\beta(g, 1, x)\\ &=& \chi_{G\tilde{\times}X} \circ G\tilde{\times}\chi_X((g, 1, \beta_1(x), x)) \\&=& \chi_{G\tilde{\times}X}(g, \beta_1(x), 1, x) \\&=& (g \beta_1(x),g, 1, x)
\end{eqnarray*}
Also, that $\varepsilon^\bQ_{(X, \beta)}$ satisfies the counitality condition is straightforward as $\varepsilon^\bQ_{(X, \beta)} = \tilde\varepsilon_X$. It remains to show that $\bar\varepsilon_{(X, \beta)}$ is a morphism of $\mathbb{S}$-coalgebras, that is,
$$
\beta \circ \varepsilon^\bQ_{(X, \beta)} = L \times \varepsilon^\bQ_{(X, \beta)} \circ \chi_X \circ G\tilde{\times}\beta.
$$
We have that 
$\beta \circ \varepsilon^\bQ_{(X, \beta)}(g, x) = (\beta_1(gx), gx) = (g\beta_1(x), gx)$. On the other hand, 
\begin{eqnarray*}
	L \times \varepsilon^\bQ_{(X, \beta)} \circ \chi_X \circ G\tilde{\times}\beta(g,x) &=& L \times \varepsilon^\bQ_{(X, \beta)}(g\beta_1(x), g, x) \\&=&
	(g\beta_1(x), gx)
\end{eqnarray*}
Thus $\mathbb{Q} = (\rQ, \Delta^\bQ, \varepsilon^\bQ)$ is a comonad on $\xGSET$.
	\end{proof}
	Next we classify all natural transformations $ \tilde{\Omega}\colon\rQ\rF^{\mathbb{S}} \Rightarrow \rF^{\mathbb{S}}\rT$.
	
	\begin{lem}\label{lem32}
		Let $X$ be a $G$-set and let $\bar{a}$ be a fixed element in $G$. Then
		 \begin{align*}
		 	\tilde\Omega_X : G\tilde{\times} L \times X &\rightarrow L \times G\tilde{\times}X \\ 	
		(g, l, x) &\mapsto (gl, g\bar{a}, x)
		\end{align*}
	is an invertible $G$-equivariant map.
	\end{lem}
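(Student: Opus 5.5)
We first make the $G$-action on each side explicit, check that $\tilde\Omega_X$ is $G$-equivariant, and then exhibit a two-sided inverse, mimicking the proof of the Lemma on $\chi_X$. On $G\tilde{\times} L \times X$ the action comes from $\rT = \rF\rU$ applied to the $G$-set $L\times X$. Thus $G$ acts only on the first factor: $k(g,l,x) = (kg,l,x)$, while $(l,x)$ is viewed as an element of the underlying set $\rU(L\times X)$. On $L\times G\tilde{\times} X$ the action is the diagonal one of $\rS = L\times -$ applied to the $G$-set $G\tilde{\times}X$, namely $k(l,g,x) = (kl,kg,x)$.

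For equivariance we compute both sides:
$$\tilde\Omega_X(k(g,l,x)) = \tilde\Omega_X(kg,l,x) = (kgl,kg\bar a,x),$$
and
$$k\,\tilde\Omega_X(g,l,x) = k(gl,g\bar a,x) = (kgl,kg\bar a,x).$$
These agree, so $\tilde\Omega_X$ is $G$-equivariant. The computation uses only associativity of the actions on $L$ and $G$, and that $\bar a$ is multiplied on the right, so it commutes with left multiplication by $k$.

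For invertibility, we define the candidate inverse
$$\bar\Omega_X\colon L\times G\tilde{\times}X \rightarrow G\tilde{\times}L\times X,\qquad (l,g,x)\mapsto (g\bar a^{-1},\ \bar a g^{-1} l,\ x).$$
Composing in one order gives
$$\bar\Omega_X\tilde\Omega_X(g,l,x) = \bar\Omega_X(gl,g\bar a,x) = (g\bar a\bar a^{-1},\ \bar a\bar a^{-1}g^{-1}gl,\ x) = (g,l,x).$$
In the other order,
$$\tilde\Omega_X\bar\Omega_X(l,g,x) = \tilde\Omega_X(g\bar a^{-1},\bar a g^{-1}l,x) = (g\bar a^{-1}\bar a g^{-1}l,\ g\bar a^{-1}\bar a,\ x) = (l,g,x).$$
Hence $\tilde\Omega_X$ is bijective. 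An inverse of an equivariant bijection is automatically equivariant, but it can also be checked directly:
$$\bar\Omega_X(kl,kg,x) = (kg\bar a^{-1},\ \bar a g^{-1}k^{-1}kl,\ x),$$
which is $k$ applied to $\bar\Omega_X(l,g,x)$.

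No step is a genuine obstacle. The only care needed is in the first step, fixing the correct actions on the two sides. In particular, $G$ acts on only the first coordinate of $G\tilde{\times}(L\times X)$, and the placement of $\bar a$ on the right is exactly what makes equivariance hold.
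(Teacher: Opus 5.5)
Your proposal is correct and matches the paper's proof: the paper uses the same inverse $\Gamma_X(l,g,x) = (g\bar a^{-1}, \bar a g^{-1} l, x)$ and verifies both composites exactly as you do. The only difference is that you write out the equivariance check, using the correct actions $k(g,l,x)=(kg,l,x)$ and $k(l,g,x)=(kl,kg,x)$, which the paper dismisses as a straightforward calculation.
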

	\begin{proof}
That $\tilde{\Omega}_X$ is $G$-equivariant is a straightforward calculation. It is invertible with inverse 
\begin{align*}
\Gamma_X : L \times G\tilde{\times}X &\rightarrow G \tilde{\times} L \times X \\ (l, g, x) &\mapsto (g\bar{a}^{-1}, \bar{a}g^{-1}l, x).
\end{align*}
	We see that  	
\begin{align*}
\Gamma_X \circ \tilde{\Omega}_X(g, l, x)&= \Gamma (gl, g\bar{a}, x) \\&= (g, l, x)  \\ \\
\tilde{\Omega}_X \circ \Gamma_X(l, g,x) &= \tilde{\Omega}_X(g\bar{a}^{-1}, \bar{a}g^{-1}l, x) \\ &= (l, g, x).
\end{align*}
So $ \Gamma_X \circ \tilde{\Omega}_X = \mathrm{Id}_{G\tilde{\times}L \times X}$ and $\tilde{\Omega}_X \circ \Gamma_X = \mathrm{Id}_{L \times G\tilde{\times} X}$.
	\end{proof}
	
	\begin{lem}\label{lem33}
		Let $X$ be a $G$-set, and let $(G\tilde{\times}L \times X, \chi_X \circ G\tilde{\times}\Delta_X)$,  $(L \times G \tilde{\times} X, \Delta_{G\tilde{\times}X})$ be $\mathbb{S}$-coalgebras.
		The maps $\tilde{\Omega}_X : (G\tilde{\times} L \times X, \chi_{L\times X}\circ G\tilde{\times} \Delta_X) \rightarrow (L \times G\tilde{\times} X, \Delta_{G\tilde{\times}X})$ and $\Gamma_X \colon (L \times G \tilde{\times} X, \Delta_{G\tilde{\times}X}) \rightarrow (G\tilde{\times}L \times X, \chi_X \circ G\tilde{\times}\Delta_X)$  are morphisms of $\mathbb{S}$-coalgebras.

	\end{lem}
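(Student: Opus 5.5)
The plan is to verify directly the defining condition for a morphism of $\mathbb{S}$-coalgebras recalled in the description of $\xGSET$: a $G$-equivariant map $f\colon (Y,\beta)\to(Y',\beta')$ is a morphism precisely when $\rS f\circ\beta=\beta'\circ f$. Equivariance (and invertibility) of $\tilde{\Omega}_X$ is already given by Lemma \ref{lem32}. For $\Gamma_X$, I will check equivariance by a one-line computation: for $h\in G$,
$$\Gamma_X(hl,hg,x)=(hg\bar a^{-1},\bar a g^{-1}h^{-1}hl,x)=h\,\Gamma_X(l,g,x).$$
It then suffices to check the two coalgebra-morphism squares on elements.

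First I make the two coalgebra structures explicit. I read the structure on $G\tilde{\times}L\times X$ as $\chi_{L\times X}\circ G\tilde{\times}\Delta_X$, since this is the only composite that typechecks; the $\chi_X$ in the statement I take as shorthand for it. It sends
$$(g,l,x)\mapsto (g,l,l,x)\mapsto (gl,g,l,x),$$
so its $L$-component is $\beta_1(g,l,x)=gl$. The cofree structure $\Delta_{G\tilde{\times}X}$ sends $(l,g,x)\mapsto(l,l,g,x)$, so its $L$-component is the projection onto $l$. As noted in the description of $\xGSET$, every coalgebra structure has the form $y\mapsto(\beta_1(y),y)$. The morphism condition for $f$ therefore reduces to the single identity $\beta_1'(f(y))=\beta_1(y)$.

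For $\tilde{\Omega}_X$, this identity reads: the $L$-component of $\tilde{\Omega}_X(g,l,x)=(gl,g\bar a,x)$ is $gl$, which equals $\beta_1(g,l,x)=gl$. For $\Gamma_X$, it reads: the structure map applied to $\Gamma_X(l,g,x)=(g\bar a^{-1},\bar a g^{-1}l,x)$ has $L$-component $g\bar a^{-1}\bar a g^{-1}l=l$, which matches the $L$-component of $\Delta_{G\tilde{\times}X}(l,g,x)$. Written out in full, both sides of each square are $(gl,gl,g\bar a,x)$ and $(l,g\bar a^{-1},\bar a g^{-1}l,x)$ respectively.

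There is no real obstacle here. The only point needing care is pinning down the correct coalgebra structure on $\rQ\rF^{\mathbb{S}}X$, i.e.\ applying $\chi$ at the object $L\times X$, and keeping track of the order of factors. I would also remark that, since $\Gamma_X$ is inverse to $\tilde{\Omega}_X$ by Lemma \ref{lem32}, the coalgebra-morphism property of $\Gamma_X$ follows formally from that of $\tilde{\Omega}_X$: the inverse of an invertible coalgebra morphism is again one. The direct check is given anyway for completeness.
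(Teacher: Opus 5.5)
Your proposal is correct and follows the same route as the paper. The paper simply asserts that the equations $\Delta_{G\tilde{\times}X}\circ\tilde{\Omega}_X = (L\times\tilde{\Omega}_X)\circ\chi_{L\times X}\circ G\tilde{\times}\Delta_X$ and $\chi_{L\times X}\circ G\tilde{\times}\Delta_X\circ\Gamma_X = (L\times\Gamma_X)\circ\Delta_{G\tilde{\times}X}$ hold by direct computation; you carry out exactly that element-wise check, with both sides giving $(gl,gl,g\bar a,x)$ and $(l,g\bar a^{-1},\bar a g^{-1}l,x)$, using the correct reading $\chi_{L\times X}$ of the structure map, and your remark that the case of $\Gamma_X$ follows formally from invertibility via Lemma \ref{lem32} is also valid.
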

	\begin{proof}
	It is a straightforward computation to show that $$\tilde{\Omega}_X : (G\tilde{\times} L \times X, \chi_{L\times X}\circ G\tilde{\times} \Delta_X) \rightarrow (L \times G\tilde{\times} X, \Delta_{G\tilde{\times}X})$$ is a morphism of $\bS$-coalgebras, that is,
	the equation 
$$
		\Delta_{G\tilde{\times}X} \circ \tilde{\Omega}_X = L \times \tilde{\Omega}_X \circ \chi_{L\times X} \circ G\tilde{\times} \Delta_X
	$$ holds. Similarly, it is a straightforward computation to show that $$\Gamma_X \colon (L \times G \tilde{\times} X, \Delta_{G\tilde{\times}X}) \rightarrow (G\tilde{\times}L \times X, \chi_{L\times X} \circ G\tilde{\times}\Delta_X)$$ 
	is a morphism of $\bS$-coalgebras, that is, the following equation
	$$
	\chi_{L\times X} \circ G\tilde{\times} \Delta_X \circ \Gamma_X = L \times \Gamma_X \circ \Delta_{G\tilde{\times} X}
	$$ 
	holds.
	
	\end{proof}
	\begin{prop} \label{prop33}
		The pair $(\rF^{\mathbb{S}}, \tilde{\Omega})$ is a lax isomorphism from the comonad $\mathbb{Q}$ on $\xGSET$ to the comonad $\mathbb{T}$ on $\GSET$ if and only if $\bar{a} = 1$.
	\end{prop}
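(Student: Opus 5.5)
The plan is to write out the two axioms that make $(\rF^{\mathbb{S}}, \tilde{\Omega})$ a lax morphism of comonads, and check them against the explicit formula for $\tilde{\Omega}_X$ from Lemma \ref{lem32}. Invertibility of each component $\tilde{\Omega}_X$ is already given by Lemma \ref{lem32}. That $\tilde{\Omega}_X$ and its inverse $\Gamma_X$ are morphisms of $\mathbb{S}$-coalgebras is Lemma \ref{lem33}. So $\tilde{\Omega}$ is an isomorphism in $\xGSET$ for every $\bar a$, and the statement reduces to showing that the lax-morphism axioms hold exactly when $\bar a = 1$. Concretely, I would verify, for every $G$-set $X$, the counit condition
$$\rF^{\mathbb{S}}\tilde{\varepsilon}_X\circ\tilde{\Omega}_X=\varepsilon^{\bQ}_{\rF^{\mathbb{S}}X}$$
and the comultiplication condition
$$\rF^{\mathbb{S}}\tilde{\Delta}_X\circ\tilde{\Omega}_X=\tilde{\Omega}_{\rT X}\circ \rQ\tilde{\Omega}_X\circ\Delta^{\bQ}_{\rF^{\mathbb{S}}X}.$$
Here $\rQ\rF^{\mathbb{S}}X$ has underlying $G$-set $G\tilde{\times}L\times X$.

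For the counit condition, evaluate both sides on $(g,l,x)$. The right-hand side is $\tilde{\varepsilon}_{L\times X}(g,l,x)=(gl,gx)$. The left-hand side is $(L\times\tilde{\varepsilon}_X)(gl,g\bar a,x)=(gl,g\bar a x)$. So the condition holds iff $\bar a x = x$ for all $x$ in every $G$-set $X$. Taking $X=G$ with left multiplication and $x=1$ forces $\bar a=1$. This already gives the ``only if'' direction.

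For the converse, I set $\bar a=1$, so that $\tilde{\Omega}_X=\chi_X$, and check the comultiplication condition directly. The left-hand side sends $(g,l,x)$ to $(L\times\tilde{\Delta}_X)(gl,g,x)=(gl,g,1,x)$. On the right-hand side:
\begin{itemize}
\item $\Delta^{\bQ}$ gives $(g,1,l,x)$;
\item $\rQ\tilde{\Omega}_X=G\tilde{\times}\tilde{\Omega}_X$ then gives $(g,l,1,x)$, since $\tilde{\Omega}_X(1,l,x)=(l,1,x)$;
\item $\tilde{\Omega}_{G\tilde{\times}X}$ finally gives $(gl,g,1,x)$.
\end{itemize}
The two sides agree. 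Together with the counit identity, which trivially holds for $\bar a=1$, this completes the equivalence.

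The only delicate point is fixing the correct form of the lax-morphism axioms: in particular, where $\rF^{\mathbb{S}}$ is applied, and that $\Delta^{\bQ}$ and $\varepsilon^{\bQ}$ act on underlying sets as $\tilde{\Delta}$ and $\tilde{\varepsilon}$. Once these are written correctly, each check is a one-line element computation.
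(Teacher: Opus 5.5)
Your proof is correct and follows essentially the paper's route: you check the two lax-morphism axioms elementwise, and your converse is identical to the paper's. The one difference is that you extract $\bar a=1$ from the counit axiom, which requires testing on a $G$-set where $\bar a$ acts nontrivially, e.g.\ $G$ itself. The paper instead reads it off the third coordinate of the comultiplication axiom, $(gl,g\bar a,\bar a,x)=(gl,g\bar a,1,x)$, which works on every component; your counit computation $(gl,g\bar a x)$ is also more careful than the paper's, which records $(gl,gx)$ without $\bar a$.
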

	\begin{proof}
	Suppose that $(\rF^{\mathbb{S}}, \tilde{\Omega})$ is lax isomorphism from $\mathbb{Q}$ to $\mathbb{T}$. Then $\tilde{\Omega} : \rQ\rF^{\mathbb{S}} \Rightarrow \rF^{\mathbb{S}}\rT$ is a natural isomorphism. That is for every $G$-set $X$, $$\tilde{\Omega}_X : (G\tilde{\times} L \times X, \chi_{L\times X} \circ G\tilde{\times}\Delta_X) \rightarrow (L \times G \tilde{\times}X, \Delta_{G\tilde{\times} X})$$
	 is an isomorphism of $\mathbb{S}$-coalgebras. In addition the following diagrams 
	 $$\xymatrix{ G\tilde{\times} L \times X \ar[d]_{\tilde{\Omega}_X}\ar[r]^{\Delta^{\mathbb{Q}}_{L \times X}} & G\tilde{\times} G\tilde{\times} L \times X \ar[r]^{G \tilde{\times} \tilde{\Omega}_X} & G\tilde{\times}L \times G\tilde{\times} X \ar[d]^{\tilde{\Omega}_{G\tilde{\times}X}} \\ L \times G\tilde{\times} X \ar[rr]_{ L \times \Delta^{\mathbb{Q}}_X} & & L \times G\tilde{\times}G\tilde{\times} X} \qquad \xymatrix{ G\tilde{\times} L \times X \ar[dr]_{\varepsilon^{\bQ}_{L\times X}} \ar[r]^{\tilde{\Omega}_X}  &  L\times G\tilde{\times}X \ar[d]^{L\times \varepsilon^{\bQ}_X} \\ & L \times X   }
	 $$
	 commute. From the first diagram above, we obtain
	 \begin{align*}
	 \tilde{\Omega}_{G\tilde{\times}X} \circ G \tilde{\times} \tilde{\Omega}_X \circ \Delta^{\bQ}_{L \times X}(g, l, x) &= L \times \Delta^{\bQ}_X \circ \tilde{\Omega}_X(g, l, x) \\ (gl,g\bar{a},\bar{a}, x)&= (gl, g\bar{a}, 1, x)
	 \end{align*}
	 Hence $\bar{a} = 1$. Conversely, suppose that $\bar{a} = 1$ and let  $\tilde{\Omega}\colon \rQ\rF^{\bS} \Rightarrow \rF^{\bS} \rT$ be a natural transformation. Then for $X$ a $G$-set we obtain 
	 $$\tilde{\Omega}_X : (G\tilde{\times} L \times X, \chi_{L\times X} \circ G\tilde{\times}\Delta_X) \rightarrow (L \times G \tilde{\times}X, \Delta_{G\tilde{\times} X})$$
	 which is an isomorphism of $\bS$-coalgebras by Lemmata \ref{lem32} and \ref{lem33}.
	 It remains to show that the two diagrams above commute. We see clearly from the above computation that if $\bar{a} = 1$, then the rectangular diagram above commutes. With the triangular diagram above, 
	 $$
	 L \times \varepsilon^{\bQ}_X \circ \tilde{\Omega}_X(g, l, x) = (gl, gx) = \varepsilon^{\bQ}_{L \times X}(g, l, x).
	 $$
	\end{proof}
	\begin{prop}
		The pair $(\tilde{\Omega}, \mathbb{T})$ is a lift of the comonad $\mathbb{Q}$ under the adjunction $V : \xGSET \rightleftarrows \GSET$, i.e, 
		$
		\tilde\Omega : \rQ\rF^{\mathbb{S}} \Rightarrow \rF^{\mathbb{S}}\rT 
		$ is a lax isomorphism of comonads.
		\end{prop}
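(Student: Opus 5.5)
This is essentially Proposition \ref{prop33} specialised to $\bar{a}=1$, together with the naturality of $\tilde{\Omega}$, which was not explicitly verified there. So I fix $\bar{a}=1$ throughout, so that $\tilde{\Omega}_X(g,l,x)=(gl,g,x)$; note that this is exactly $\chi_X$. The statement asserts that $(\rF^{\mathbb{S}},\tilde\Omega)$ is a lax morphism of comonads from $\mathbb{Q}$ to $\mathbb{T}$ whose underlying transformation is invertible. I will check the following in order:
\begin{enumerate}
\item each component is an isomorphism in $\xGSET$;
\item $\tilde\Omega$ is natural in $X$;
\item the comultiplication square commutes;
\item the counit triangle commutes.
\end{enumerate}

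For (1), Lemma \ref{lem32} gives $G$-equivariance and invertibility, with inverse $\Gamma_X(l,g,x)=(g,g^{-1}l,x)=\tilde\chi_X$. Lemma \ref{lem33} shows that both $\tilde\Omega_X$ and $\Gamma_X$ are morphisms of $\mathbb{S}$-coalgebras between $(G\tilde{\times}L\times X,\chi_{L\times X}\circ G\tilde{\times}\Delta_X)=\rQ\rF^{\mathbb{S}}X$ and $(L\times G\tilde{\times}X,\Delta_{G\tilde{\times}X})=\rF^{\mathbb{S}}\rT X$. Hence each $\tilde\Omega_X$ is an isomorphism in $\xGSET$.

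For (2), take a $G$-map $f\colon X\to Y$. Then $\rQ\rF^{\mathbb{S}}f=G\tilde{\times}L\times f$ and $\rF^{\mathbb{S}}\rT f=L\times G\tilde{\times}f$. Both composites send $(g,l,x)$ to $(gl,g,f(x))$, so naturality is a one-line check. Naturality of the inverse then follows automatically.

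For (3) and (4), I will reuse the computations in the proof of Proposition \ref{prop33}. The comultiplication square reduces to the identity $(gl,g\bar a,\bar a,x)=(gl,g\bar a,1,x)$, which holds when $\bar a=1$. In the counit triangle both sides give $(gl,gx)$.

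The main obstacle is not computational but definitional: one must make sure the diagrams of Proposition \ref{prop33} are the correct axioms for a lax morphism. Concretely, $\Delta^{\mathbb{Q}}$ must be taken at the object $\rF^{\mathbb{S}}X$, and the outer square must read $\tilde\Omega_{\rT X}\circ\rQ\tilde\Omega_X\circ\Delta^{\mathbb{Q}}_{\rF^{\mathbb{S}}X}=\rF^{\mathbb{S}}\tilde\Delta_X\circ\tilde\Omega_X$. The counit axiom must read $\rF^{\mathbb{S}}\tilde\varepsilon_X\circ\tilde\Omega_X=\varepsilon^{\mathbb{Q}}_{\rF^{\mathbb{S}}X}$. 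I will check that these coincide with the displayed diagrams, using $\Delta^{\mathbb{Q}}=\tilde\Delta$ and $\varepsilon^{\mathbb{Q}}=\tilde\varepsilon$ on underlying $G$-sets. This is legitimate because all morphisms involved are morphisms in $\xGSET$ and the forgetful functor $\rV$ is faithful, so equalities may be checked on underlying $G$-sets.
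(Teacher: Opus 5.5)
Your proposal is correct and follows the paper's route. The paper's proof simply cites Lemmata~\ref{lem32} and \ref{lem33} and Proposition~\ref{prop33} (i.e.\ the case $\bar a=1$), which are exactly the ingredients you assemble. Your explicit naturality check, and your identification of the displayed diagrams with the lax-morphism axioms, fill in details the paper leaves implicit without changing the argument.
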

		\begin{proof}
		From Lemmata \ref{lem32} and \ref{lem33} and Proposition \ref{prop33} we have that $\tilde{\Omega}$ is a lax isomorphism.
		\end{proof}
		Since $\tilde{\Omega}$ uniquely induces a mate, we have $\tilde{\Lambda} : \rV\rQ \Rightarrow \rT\rV$ by the following formula 
		$$
		\tilde{\Lambda} \coloneq \varepsilon^{VF^{\mathbb{S}}}\rQ\rV \circ \rV\tilde{\Omega}\rV \circ \rV\rQ\eta^{F^{\mathbb{S}}V} : \rV\rQ \Rightarrow \rT\rV. 
		$$
		Explicitly, for any $\mathbb{S}$-coalgebra $(X, \beta)$, 
		$\tilde{\Lambda}_{(X, \beta)} : G\tilde{\times} X \rightarrow G \tilde{\times} X$ sends $(g, x)$ to $(g\bar{a}, x)$.
		As already mentioned, the mate induced by a lift through an adjunction is not invertible in general although the lift is. In what follows we see a peculiar case where the mate $\tilde{\Lambda}$ is invertible.
\begin{prop}\label{prop35}
 The natural transformation $\tilde{\Lambda} : \rV\rQ \Rightarrow \rT\rV$ is invertible. 
\end{prop}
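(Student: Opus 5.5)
The plan is to use the explicit component formula recorded just before the statement. For an $\mathbb{S}$-coalgebra $(X,\beta)$, the component $\tilde{\Lambda}_{(X,\beta)}\colon G\tilde{\times}X \rightarrow G\tilde{\times}X$ is $(g,x)\mapsto (g\bar a, x)$. By Proposition \ref{prop33} we may in fact take $\bar a = 1$, since we are working with the lax isomorphism $\tilde{\Omega}$. The argument does not need this, however: I will simply write down a componentwise inverse and check that it is well defined, inverse, and natural.

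First I would re-derive the formula for $\tilde{\Lambda}$ from
$$\tilde{\Lambda} = \varepsilon^{VF^{\mathbb{S}}}\rQ\rV \circ \rV\tilde{\Omega}\rV \circ \rV\rQ\eta^{F^{\mathbb{S}}V}.$$
This makes clear which composite we are inverting. The unit $\eta_{(X,\beta)} = \beta$ sends $x\mapsto (\beta_1(x),x)$. Applying $G\tilde{\times}-$ gives $(g,x)\mapsto (g,\beta_1(x),x)$. The map $\tilde{\Omega}_X$ then gives $(g\beta_1(x), g\bar a, x)$, and the counit $\varepsilon$ of $\mathbb{S}$ projects this to $(g\bar a, x)$.

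Next I would define
$$\Xi_{(X,\beta)}\colon G\tilde{\times}X \rightarrow G\tilde{\times}X,\qquad (g,x)\mapsto (g\bar a^{-1},x).$$
This map is $G$-equivariant, because the action only multiplies on the left in the first coordinate, while $\bar a^{-1}$ acts on the right. The composites $\Xi\circ\tilde{\Lambda}$ and $\tilde{\Lambda}\circ\Xi$ are both the identity, by the one-line computations $g\bar a\bar a^{-1}=g$ and $g\bar a^{-1}\bar a = g$.

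Naturality of $\Xi$ in $(X,\beta)$ is immediate. A morphism $f\colon(X,\beta)\rightarrow(Y,\gamma)$ acts on $G\tilde{\times}X$ as $G\tilde{\times}f$, that is, only on the second coordinate, and this commutes with right multiplication by $\bar a^{-1}$ in the first coordinate. Alternatively, naturality follows formally, since the componentwise inverse of a natural transformation is automatically natural. Hence $\tilde{\Lambda}$ is a natural isomorphism $\rV\rQ\Rightarrow\rT\rV$.

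The only real obstacle is the first step: correctly evaluating the mate formula, in particular identifying the unit with $\beta$ and the counit with the projection $\varepsilon$. Once that is done, inverting right multiplication by a fixed group element is routine. In the case $\bar a=1$ forced by Proposition \ref{prop33}, $\tilde{\Lambda}$ is simply the identity.
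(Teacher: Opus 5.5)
Your proposal is correct and follows the paper's proof: you exhibit the componentwise inverse $(g,x)\mapsto(g\bar a^{-1},x)$ and check that both composites with $\tilde{\Lambda}_{(X,\beta)}$ are the identity. The re-derivation of the mate formula and the checks of equivariance and naturality are correct, and they add details the paper only asserts.
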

\begin{proof}
 For any $\mathbb{S}$-coalgebra $(X, \beta)$, \begin{align*}\tilde{\Lambda}_{(X, \beta)} : G\tilde{\times} X &\rightarrow G\tilde{\times} X \\ (g, x) &\mapsto (g\bar{a}, x)
 	\end{align*}
 	has an inverse 
 	\begin{align*}\tilde{\Gamma}_{(X, \beta)} : G\tilde{\times} X &\rightarrow G\tilde{\times} X \\ (g, x) &\mapsto (g\bar{a}^{-1}, x)
 		\end{align*}
 		It is a straightforward calculation to show that $\tilde{\Omega}_{(X, \beta)} \circ \tilde{\Gamma}_{(X, \beta)} = \mathrm{Id}_{G\tilde{\times}X}$ and $\tilde{\Gamma}_{(X, \beta)} \circ \tilde{\Omega}_{(X, \beta)} = \mathrm{Id}_{G\tilde{\times}X}$.
\end{proof}
\begin{lem}
	The pair $(\rV, \tilde{\Lambda})$ is colax isomorphism from the comonad $\mathbb{Q}$ on $\xGSET$ to the comonad $\mathbb{T}$ on $\GSET$ if and only if $\bar{a} = 1$.
\end{lem}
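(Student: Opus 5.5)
We check the definition of a colax morphism of comonads directly. By Proposition \ref{prop35}, $\tilde{\Lambda}\colon \rV\rQ \Rightarrow \rT\rV$ is a natural isomorphism, with components $\tilde{\Lambda}_{(X,\beta)}(g,x) = (g\bar{a},x)$. So $(\rV,\tilde{\Lambda})$ is a colax isomorphism exactly when the two compatibility diagrams hold for every $\mathbb{S}$-coalgebra $(X,\beta)$. These are the comultiplication square
$$\tilde{\Delta}_{\rV(X,\beta)} \circ \tilde{\Lambda}_{(X,\beta)} = \rT\tilde{\Lambda}_{(X,\beta)} \circ \tilde{\Lambda}_{\rQ(X,\beta)} \circ \rV\Delta^{\bQ}_{(X,\beta)}$$
and the counit triangle
$$\tilde{\varepsilon}_{\rV(X,\beta)} \circ \tilde{\Lambda}_{(X,\beta)} = \rV\varepsilon^{\bQ}_{(X,\beta)}.$$
The plan mirrors the proof of Proposition \ref{prop33}: evaluate both sides on an element $(g,x)\in G\tilde{\times}X$ using the explicit formulas for $\Delta^{\bQ} = \tilde{\Delta}$ and $\varepsilon^{\bQ} = \tilde{\varepsilon}$.

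For the forward direction, assume the square commutes.
\begin{itemize}
\item The left-hand side sends $(g,x)$ to $\tilde{\Delta}_X(g\bar{a},x) = (g\bar{a},1,x)$.
\item On the right-hand side, $\Delta^{\bQ}(g,x) = (g,1,x)$.
\item Applying $\tilde{\Lambda}_{\rQ(X,\beta)}$, which acts on the outer $G$-factor of $G\tilde{\times}(G\tilde{\times}X)$, gives $(g\bar{a},1,x)$.
\item Applying $\rT\tilde{\Lambda}_{(X,\beta)} = G\tilde{\times}\tilde{\Lambda}_{(X,\beta)}$ then gives $(g\bar{a},\bar{a},x)$.
\end{itemize}
Comparing the middle coordinates forces $\bar{a}=1$. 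Alternatively, the counit triangle alone already gives this: it reads $g\bar{a}x = gx$ for all $g,x$. Taking $X$ to be the regular $G$-set $G$ (with any $\mathbb{S}$-coalgebra structure, e.g.\ the cofree one $L\times G$) yields $\bar{a}=1$. The paper's pattern suggests using the square, so I would use that and mention the triangle as a check.

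For the converse, set $\bar{a}=1$. Then $\tilde{\Lambda}_{(X,\beta)}$ is the identity of $G\tilde{\times}X$, and both diagrams reduce to $\tilde{\Delta}=\Delta^{\bQ}$ and $\tilde{\varepsilon}=\varepsilon^{\bQ}$ under $\rV$. These hold by construction of $\bQ$ in the proposition defining it. Invertibility comes from Proposition \ref{prop35}, and naturality is immediate since $\tilde{\Lambda}$ only multiplies the $G$-coordinate on the right.

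The main obstacle is bookkeeping rather than any real difficulty. One must correctly identify the whiskered components $\tilde{\Lambda}_{\rQ(X,\beta)}$ and $\rT\tilde{\Lambda}_{(X,\beta)}$ on $G\tilde{\times}G\tilde{\times}X$, that is, decide which $G$-factor each one acts on. I would fix this by noting that $\rV\rQ(X,\beta) = G\tilde{\times}\rV(X,\beta)$, so the $\tilde{\Lambda}$ component at $\rQ(X,\beta)$ acts on the outermost coordinate, while $\rT$ applied to $\tilde{\Lambda}$ acts on the inner one.
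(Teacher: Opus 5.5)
Your proposal is correct and follows the paper's proof. You evaluate the comultiplication square on $(g,x)$ to get $(g\bar{a},\bar{a},x)$ versus $(g\bar{a},1,x)$, which forces $\bar{a}=1$. For the converse you observe that $\tilde{\Lambda}$ becomes the identity, so both diagrams commute, with invertibility from Proposition \ref{prop35}.

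Your extra counit-triangle check also works, provided you evaluate on an $\mathbb{S}$-coalgebra whose underlying $G$-action is faithful, such as $\rF^{\mathbb{S}}G = L\times G$. Like the paper's argument, this tacitly needs $L\neq\emptyset$.
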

\begin{proof}
	Suppose that $(V, \tilde{\Lambda})$ is a colax isomorphism from the comonad $\bQ$ to the comonad $\bT$. Then $\tilde{\Lambda} : \rV\rQ \Rightarrow \rT\rV$ is a natural isomorphism. That is for any $\bS$-coalgebra $(X, \beta)$,
	$
	\tilde{\Lambda}_{(X, \beta)} \colon G\tilde{\times} X \rightarrow G\tilde{\times}X
	$ is a bijective $G$-equivariant map and the following diagrams 
	 $$\xymatrix{ G\tilde{\times} X \ar[d]_{\tilde{\Lambda}_X}\ar[r]^{\Delta^{\mathbb{Q}}_{X}} & G\tilde{\times} G\tilde{\times} X \ar[r]^{G \tilde{\times} \tilde{\Lambda}_X} & G\tilde{\times} G\tilde{\times} X \ar[d]^{\tilde{\Lambda}_{G\tilde{\times}X}} \\  G\tilde{\times} X \ar[rr]_{ \Delta^{\mathbb{Q}}_X} & &  G\tilde{\times}G\tilde{\times} X} \qquad \xymatrix{ G\tilde{\times} X \ar[dr]_{\varepsilon^{\bQ}_{ X}} \ar[r]^{\tilde{\Lambda}_X}  &  G\tilde{\times}X \ar[d]^{ \varepsilon^{\bQ}_X} \\ & X   }
	$$
	commute.
	From the first diagram above we have that 
	\begin{align*}
		\tilde{\Lambda}_{G\tilde{\times} X} \circ G\tilde{\times}\tilde{\Lambda}_X \circ \Delta^{\bQ}(g, x) &= \Delta^{\bQ}_X \circ \tilde{\Lambda}_X(g, x) \\ (g\bar{a}, 1, x) & = (g\bar{a}, \bar{a}, x)
	\end{align*}
	Thus $\bar{a} = 1$. Conversely, suppose that $\bar{a} = 1$ and let $\tilde{\Lambda}_{(X, \beta)} \colon G\tilde{\times}X \rightarrow G\tilde{\times}X$. It is straightforward to show that this map is $G$-equivariant. Next by Proposition \ref{prop35} $\tilde{\Lambda}$ is invertible. Furthermore we see that the above diagrams commute if $\bar{a} = 1$. Thus the natural transformation $\tilde{\Lambda} : \rV\rQ \Rightarrow \rT\rV$ is a colax isomorphism of comonads. Thus $\tilde{\Lambda}_{(X, \beta)}$ is the identity map for every $\bS$-coalgebra $(X, \beta)$.
\end{proof}

\section{Left $\chi$-coalgebras and $\mathbb Q$-opcoalgebras}
Here we present the main results of the work. We recall the classification result of left $\chi$-coalgebras in \cite[Proposition 5.5]{Boi25} and show that it is an instance of \cite[Theorem 3.1]{BBK25}. For the sake of completeness we state the Proposition without proof:
	\begin{prop}\label{prop5} Let  $\tilde{M} : \GSET
		\rightarrow \SET$ be the
		functor which takes any
		$G$-set $X$ to the set of
		orbits in $X$, 
		\begin{align*} \tilde{\rM}X:=  X/G. \end{align*}
		\begin{enumerate} \item Let $h
			: L \rightarrow G$ be a
			$G$-equivariant map and let
			$\tilde{\lambda}_N : L \times
			N \rightarrow G\tilde{\times} N$ be a
			map defined by
			$\tilde{\lambda}_N(b, n) =
			(h(b), h(b)^{-1}n).$ Then
			$\tilde{\lambda}_N$ is a
			$G$-equivariant map.  \item
			The natural transformation $$\lambda_N : (L \times
			N)/G \rightarrow (G\tilde{\times}
			N)/G \quad ([b,n]) \mapsto
			[\tilde{\lambda}_N(b, n)]$$ 
			is a left
			$\chi$-coalgebra.
	\end{enumerate}
	\end{prop}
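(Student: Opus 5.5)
The statement to prove is Proposition \ref{prop5}, which is restated from \cite[Proposition 5.5]{Boi25}. I will verify its two claims directly.

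\emph{Part (1): equivariance.} I use that $h$ is $G$-equivariant when $G$ acts on itself by left multiplication, so $h(gb)=gh(b)$. Then for $g\in G$,
\begin{align*}
\tilde{\lambda}_N(gb,gn) &= (h(gb),\, h(gb)^{-1}gn) \\
&= (gh(b),\, h(b)^{-1}g^{-1}gn) \\
&= (gh(b),\, h(b)^{-1}n).
\end{align*}
This equals $g\,\tilde{\lambda}_N(b,n)$, since $G$ acts on $G\tilde{\times}N$ only through the first factor. As a consequence, $\tilde{\lambda}_N$ maps orbits to orbits, and $\lambda_N$ is well defined on $(L\times N)/G$. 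Naturality in $N$ is immediate: for a $G$-map $f\colon N\to N'$ we have $f(h(b)^{-1}n)=h(b)^{-1}f(n)$.

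\emph{Part (2): the left $\chi$-coalgebra axioms.} I write out the axioms as in \cite{KKS15}, with $\lambda\colon \tilde{\rM}\rS\Rightarrow\tilde{\rM}\rT$. There are two of them.

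\textbf{Counit compatibility.} The required identity is $\tilde{\rM}\varepsilon\circ\lambda=\tilde{\rM}\tilde{\varepsilon}$. Both sides send $[b,n]$ to $[n]$: the right side is $[\varepsilon(b,n)]=[n]$, and the left side is $[h(b)h(b)^{-1}n]=[n]$. So this axiom holds without further work.

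\textbf{Comultiplication compatibility.} The required identity is
\[
\tilde{\rM}\tilde{\Delta}\rS\circ\lambda\rS\circ\tilde{\rM}\Delta \;=\; \lambda\rT\circ\tilde{\rM}\rS\lambda'\cdots ,
\]
more precisely the standard hexagon in which $\chi$ appears. I would evaluate both sides on a representative $[b,n]$. Doing so reduces the axiom to the identity
\[
\bigl[(h(b),\,1,\,h(b)^{-1}b',\,h(b)^{-1}n)\bigr]\ \text{versus}\ \bigl[\chi(\ldots)\bigr]
\]
in the orbit set $(G\tilde{\times}G\tilde{\times}L\times N)/G$, or in the corresponding orbit set for the other ordering of the factors.

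The key tool for checking this is the orbit identification $[g,x]=[1,g^{-1}x]$ in $(G\tilde{\times}X)/G$. It lets me strip the leading group elements on each side to the identity. The equality then reduces to equivariance of $h$, in the form $h(h(b)^{-1}b)=1$.

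\emph{Main obstacle.} The hard step is the comultiplication hexagon, because of the bookkeeping of where $\chi$ and $\tilde{\Delta}$ are whiskered. I will first fix the exact form of the axiom from \cite{KKS15}, and then normalize every representative by moving group elements out via the $G$-action before comparing the two sides.
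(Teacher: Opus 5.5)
\textbf{There is a genuine gap in part (2).} Your part (1), the well-definedness and naturality of $\lambda$, and the counit axiom are fine. With the paper's notation the counit axiom reads $\tilde{\rM}\tilde{\varepsilon}\circ\lambda=\tilde{\rM}\varepsilon$; you swapped the two counits in the displayed identity, although your computation is the correct one. The paper only quotes this proposition from \cite{Boi25} without proof, so your argument has to stand on its own.

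The gap is the comultiplication axiom. It is the real content of part (2), and you neither state it nor check it. The axiom is
\[
\tilde{\rM}\tilde{\Delta}_N\circ\lambda_N=\lambda_{\rT N}\circ\tilde{\rM}\chi_N\circ\lambda_{\rS N}\circ\tilde{\rM}\Delta_N ,
\]
an equality of maps $(L\times N)/G\to(G\tilde{\times}G\tilde{\times}N)/G$. Your ``reduction'' to an identity in $(G\tilde{\times}G\tilde{\times}L\times N)/G$, with an unexplained $b'$, therefore compares elements of the wrong set.

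More seriously, the key tool you propose, $[g,x]=[1,g^{-1}x]$ in $(G\tilde{\times}X)/G$, is false. $G$ acts on $G\tilde{\times}X$ only through the first factor, exactly as you used in part (1). So the orbit of $(g,x)$ is $G\times\{x\}$, and the correct rule is $[g,x]=[1,x]$. Your rule is the one for the diagonal action. Applied to the terms of the hexagon it gives wrong representatives. For example, it identifies $[h(b),1,h(b)^{-1}n]$ with $[1,h(b)^{-1},h(b)^{-1}n]$, which is a different orbit whenever $h(b)\neq 1$. So the check cannot be carried out as you describe it.

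\textbf{The fix is short.} The left side sends $[b,n]$ to $[h(b),1,h(b)^{-1}n]$. On the right side:
\begin{enumerate}
\item $\tilde{\rM}\Delta_N$ gives $[b,n]\mapsto[b,b,n]$.
\item $\lambda_{\rS N}$ gives $[h(b),h(b)^{-1}b,h(b)^{-1}n]$, using the diagonal action on $L\times N$.
\item $\tilde{\rM}\chi_N$ gives $[b,h(b),h(b)^{-1}n]$, because $h(b)\cdot h(b)^{-1}b=b$.
\item $\lambda_{\rT N}$ gives $[h(b),h(b)^{-1}h(b),h(b)^{-1}n]=[h(b),1,h(b)^{-1}n]$, since $G$ acts on $G\tilde{\times}N$ only through the first coordinate.
\end{enumerate}
Alternatively, you can normalize with the correct rule after step 2. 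This gives
\[
[1,h(b)^{-1}b,h(b)^{-1}n]\mapsto[h(b)^{-1}b,1,h(b)^{-1}n]\mapsto[1,1,h(b)^{-1}n],
\]
and the left side normalizes to the same $[1,1,h(b)^{-1}n]$. In this version the identity $h(h(b)^{-1}b)=1$ that you anticipated does enter, at the last step.
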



\begin{prop}\label{corresp}
	Let $(X, \beta)$ be an $\mathbb{S}$-coalgebra and $h : L \rightarrow G$ be a $G$-equivariant map and consider the $\mathbb{S}$-coalgebra $\rQ(X, \beta) = (G\tilde{\times}X, \chi_X \circ G \tilde{\times} \beta)$.
	\begin{enumerate}
		\item The natural transformation $\nabla : \mathrm{Id}_{\xGSET} \Rightarrow \rQ$ has its components
	$$\nabla_{(X, \beta)} : (X, \beta) \rightarrow (G\tilde{\times}X, \chi_X\circ G\tilde{\times} \beta)  \qquad x \mapsto (h(\beta_1(x)), h(\beta_1(x))^{-1}x)$$
	 as morphisms of $\mathbb{S}$-coalgebras. 
	\item The pair $((X, \beta), \nabla_{(X, \beta)})$ is a $\mathbb{Q}$-coalgebra and $(X, \rV\nabla_{(X, \beta)})$ is a $\rV\mathbb{Q}$-coalgebra.
	\item The pair $(\tilde{\rM}X, \tilde{\rM}\rV\nabla_{(X, \beta)})$ is an $\tilde{\rM}\rV\bQ$-coalgebra.
	\end{enumerate}
\end{prop}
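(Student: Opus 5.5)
We verify the three parts in order by direct computation, using only the explicit formulas for $\chi$, $\Delta^{\bQ}$, $\varepsilon^{\bQ}$ and the description $\beta(x)=(\beta_1(x),x)$ of an $\bS$-coalgebra with $\beta_1\colon X\to L$ $G$-equivariant. Throughout we write $k(x):=h(\beta_1(x))$. Since $h$ and $\beta_1$ are $G$-equivariant and $G$ acts on itself by left multiplication (the $G$-set structure of $G$ used in $G\tilde{\times}-$), we have $k(gx)=gk(x)$.

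For (1), we first check that $\nabla_{(X,\beta)}$ is $G$-equivariant:
$$\nabla(gx)=(gk(x),\,k(x)^{-1}g^{-1}gx)=(gk(x),k(x)^{-1}x)=g\nabla(x),$$
using the action $g(a,w)=(ga,w)$ on $G\tilde{\times}X$. Next we check the coalgebra morphism condition $(L\times\nabla)\circ\beta=(\chi_X\circ G\tilde{\times}\beta)\circ\nabla$. The left side sends $x$ to $(\beta_1(x),k(x),k(x)^{-1}x)$. The right side sends $x$ to
$$\chi_X\bigl(k(x),\beta_1(k(x)^{-1}x),k(x)^{-1}x\bigr)=\bigl(k(x)k(x)^{-1}\beta_1(x),\,k(x),\,k(x)^{-1}x\bigr).$$
This agrees with the left side, again by equivariance of $\beta_1$. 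Naturality in $(X,\beta)$ remains. A morphism $f\colon(X,\beta)\to(Y,\alpha)$ satisfies $\alpha_1\circ f=\beta_1$, so
$$(G\tilde{\times}f)\nabla(x)=(k(x),f(k(x)^{-1}x))=(h\alpha_1(f(x)),\,h\alpha_1(f(x))^{-1}f(x))=\nabla f(x).$$

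For (2), we must verify counitality $\varepsilon^{\bQ}\circ\nabla=\mathrm{id}$ and coassociativity $\Delta^{\bQ}\circ\nabla=\rQ\nabla\circ\nabla$. Counitality is immediate: $k(x)k(x)^{-1}x=x$. For coassociativity, the left side gives $(k(x),1,k(x)^{-1}x)$. On the right, $\rQ\nabla=G\tilde{\times}\nabla$ sends $(k(x),y)$ with $y=k(x)^{-1}x$ to $(k(x),k(y),k(y)^{-1}y)$. Here $k(y)=k(x)^{-1}k(x)=1$ by equivariance, so both sides agree.

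This step is the crux, and the only place where equivariance of $h$ is essential. Applying $\rV$ forgets the coalgebra structure, so the same identities show that $(X,\rV\nabla)$ is a coalgebra for the comonad $\rV\bQ$. Here we interpret $\rV\bQ$ on $\GSET$-level data as in the paper, with structure maps $\tilde\Delta_X$ and $\tilde\varepsilon_X$.

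For (3), we apply the functor $\tilde{\rM}$ to the identities of (2). Functoriality carries the counit and coassociativity equations to the corresponding equations for $\tilde{\rM}\rV\nabla$, with structure maps $\tilde{\rM}\tilde\Delta$ and $\tilde{\rM}\tilde\varepsilon$. The induced map on orbits is well defined because $\nabla$ is equivariant: explicitly $[x]\mapsto[k(x),k(x)^{-1}x]$. The main obstacle is not computational. It is making precise what "$\tilde{\rM}\rV\bQ$-coalgebra" means when $\tilde{\rM}$ is not a comonad morphism. We would read it as the image under $\tilde{\rM}$ of the equations of (2), which is exactly what the opcoalgebra language of Theorem \ref{helpful} requires.
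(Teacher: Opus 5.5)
Your proposal is correct and follows essentially the same route as the paper. Both verify directly that $\nabla_{(X,\beta)}$ is equivariant and an $\mathbb{S}$-coalgebra map, then check the counit and coassociativity identities (the key point being $h(\beta_1(h(\beta_1(x))^{-1}x))=1$ by equivariance), and finally apply $\tilde{\rM}$. Your explicit naturality check, which the paper asserts but never verifies, is a worthwhile addition: naturality at the morphism $\nabla_{(X,\beta)}$ gives $\nabla_{\rQ(X,\beta)}\circ\nabla_{(X,\beta)}=\rQ\nabla_{(X,\beta)}\circ\nabla_{(X,\beta)}$, and this is exactly what turns the image under $\tilde{\rM}\rV$ of the coassociativity in (2) into the opcoalgebra axiom needed in (3).
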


\begin{proof}
It is straightforward to show that $\nabla_{(X, \beta)} : X \rightarrow G \tilde{\times} X$ is $G$-equivariant. It remains to show that $\nabla_{(X, \beta)}$ is a morphism of $\mathbb{S}$-coalgebras:
\begin{align*}
\chi_X \circ G \tilde{\times}\beta \circ \nabla_{(X, \beta)} (x) &= \chi_X (h(\beta_1(x)),h(\beta_1(x))^{-1}\beta_1(x), h(\beta_1(x))^{-1}x) \\&= (\beta_1(x), h(\beta_1(x)), h(\beta_1(x))^{-1}x)
\end{align*}
\begin{align*}
	L \times \nabla_{(X, \beta)} \circ \beta(x) &= \L \times \nabla_{(X, \beta)} (\beta_1(x),x) \\
	&= (\beta_1(x), h(\beta_1(x)), h(\beta_1(x))^{-1}). 
\end{align*}
Thus  $\chi_X \circ G \tilde{\times}\beta \circ \nabla_{(X, \beta)} = L \times \nabla_{(X, \beta)} \circ \beta$. 
Next we show that $((X, \beta), \nabla_{(X, \beta)})$ is a $\mathbb{Q}$-coalgebra, that is, the following equations hold:
$$
\varepsilon^{\bQ} \circ \nabla_{(X, \beta)} = \mathrm{Id} \qquad \Delta^{\bQ} \circ \nabla_{(X, \beta)} = G \tilde{\times} \nabla_{(X, \beta)} \circ \nabla_{(X, \beta)}.
$$
It is a straightforward calculation to show that $\varepsilon^{\bQ} \circ \nabla_{(X, \beta)} = \mathrm{Id}$. Next, 
\begin{align*}
\Delta^{\bQ}_X\circ \nabla_{(X, \beta)}(x) &= \Delta^{\bQ}_X(h(\beta_1(x)), h(\beta_1(x))^{-1}x) \\&= (h(\beta_1(x)), 1, h(\beta_1(x))^{-1}x).
\end{align*}
On the other hand, 
\begin{align*}
	G\tilde{\times} \nabla_{(X, \beta)} \circ \nabla_{(X, \beta)} (x) &= G \tilde{\times} \nabla_{(X, \beta)}(h(\beta_1(x)), h(\beta_1(x))^{-1}x) \\&= (h(\beta_1(x)), 1, h(\beta_1(x))^{-1}x). 
	\end{align*}
	Lastly, we show that $(\rV (X,\beta), \rV \nabla_{(X, \beta)}:= \tilde{\nabla})$ is $\rV\bQ$-coalgebra. Consider the map
	\begin{align}
		\tilde{\nabla} : \rV (X, \beta) &\rightarrow VQ(X, \beta) \coloneq G\tilde{\times} X \label{nab1} \\
		x &\rightarrow (h(\beta_1(x)), h(\beta_1(x))^{-1}x). \nonumber
		\end{align}
		This map is $G$-equivariant as $\tilde{\nabla}(gx) = (gh(\beta_1(x)), h(\beta_1(x))^{-1}x) = g\tilde{\nabla}(x)$.
		Furthermore, 
		\begin{align*}
			\Delta^{\bQ}_X \circ \tilde{\nabla} (x) &= \Delta^\bQ_X(h(\beta_1(x)), h(\beta_1(x))^{-1}x) \\&= (h(\beta_1(x)), 1, h(\beta_1(x))^{-1}x) \\ \\
			G \tilde{\times} \tilde{\nabla} \circ \tilde{\nabla}(x) &= G\tilde{\times} \tilde{\nabla}(h(\beta_1(x)), h(\beta_1(x))^{-1}x) \\& = (h(\beta_1(x)), 1, h(\beta_1(x))^{-1}x)
			\end{align*}
	So that $G\tilde{\times}\tilde{\nabla} \circ \tilde{\nabla} = \Delta^{\bQ} \circ \tilde{\nabla}$. In addition 
	$$
	\varepsilon^{\bQ}_X \circ \tilde{\nabla}(x) = \varepsilon^{\bQ}_X(h(\beta_1(x)), h(\beta_1(x))^{-1}x) = x = \mathrm{Id}(x).
	$$
	Thus $(X,\rV\nabla_{(X, \beta)})$ is a $\rV\bQ$-coalgebra.

 Finally, applying the functor $\tilde{\rM}$ to (\ref{nab1}) we obtain 
 \begin{align*}\tilde{\rM}\tilde{\nabla} : \tilde{\rM}\rV(X, \beta) & \rightarrow \tilde{\rM}\rV\rQ(X, \beta) \\ [x] & \mapsto [(h(\beta_1(x)), h(\beta_1(x))^{-1}x)]
\end{align*} and by the above result it is straightforward to show that $(\tilde{\rM}\rV,\tilde{\rM}\tilde{\nabla})$ is a $\bQ$-opcoalgebra structure on $\tilde{\rM}\rV$.

	\end{proof}

\begin{thm}
	Let  $\tilde{\rM} : \GSET
	\rightarrow \SET$ be the
	functor which takes any
	$G$-set $X$ to the set of
	orbits in $X$, 
	$\tilde{\rM}X:=  X/G.$
	 Let $h
		: L \rightarrow G$ be a
		$G$-equivariant map and let
		$\tilde{\lambda}_N : L \times
		N \rightarrow G\tilde{\times} N$ be the
		$G$-equivariant map defined by
		$\tilde{\lambda}_N(b, n) =
		(h(b), h(b)^{-1}n).$ 
		Then the left $\chi$-coalgebra $$\lambda_N : (L \times
		N)/G \rightarrow (G\tilde{\times}
		N)/G \quad ([b,n]) \mapsto
		[\tilde{\lambda}_N(b, n)]$$ 
	corresponds bijectively
	 to $\bQ$-opcoalgebra structures on $\tilde{\rM}\rV$.
\end{thm}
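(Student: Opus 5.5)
The plan is to obtain the statement as a direct application of Theorem \ref{helpful} (that is, \cite[Theorem 3.1]{BBK25}), with $\mc C = \xGSET$, $\mc B = \GSET$, $\mc Z = \SET$, $\rN = \tilde{\rM}$, and the adjunction $\rV \dashv \rF^{\bS}$ for the comonad $\bS$. First I would check its three hypotheses using what has already been established. Hypothesis (1) holds because $\xGSET$ is the Eilenberg--Moore category of $\bS$. Hypothesis (2) is the construction of $\bQ$: it is a comonad by the proposition defining $\rQ(X,\beta) = (G\tilde{\times}X, \chi\circ G\tilde{\times}\beta)$, and it extends $\bT$ via $\tilde\Omega$ by Lemmata \ref{lem32} and \ref{lem33}. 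Hypothesis (3) holds by Proposition \ref{prop33} with $\bar a = 1$, together with the subsequent lemma showing that the mate $\tilde\Lambda$ is a colax isomorphism, indeed the identity. Theorem \ref{helpful} then gives that the induced distributive law $\psi$ is invertible. I would identify $\psi$ with $\tilde{\chi}$, so that $\psi^{-1} = \chi$ by the invertibility lemma and the uniqueness proposition for distributive laws between $\bS$ and $\bT$. Hence left $\chi$-coalgebra structures on $\tilde{\rM}$ correspond bijectively to $\bQ$-opcoalgebra structures on $\tilde{\rM}\rV$.

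Second, I would make the bijection explicit on the specific $\lambda$ of Proposition \ref{prop5}. In one direction, starting from $\lambda$, the recipe of \cite{BBK25} produces an opcoalgebra structure on $\tilde{\rM}\rV$ whose component at $(X,\beta)$ is
\[
\tilde{\rM}\rV(X,\beta) \xrightarrow{\tilde{\rM}\beta} \tilde{\rM}(L\times X) \xrightarrow{\lambda_X} \tilde{\rM}(G\tilde{\times}X) \xrightarrow{\tilde{\rM}\tilde\Lambda^{-1}} \tilde{\rM}\rV\rQ(X,\beta).
\]
Since $\tilde\Lambda$ is the identity, this sends $[x] \mapsto [(h(\beta_1(x)), h(\beta_1(x))^{-1}x)]$, which is exactly $\tilde{\rM}\tilde\nabla$ from Proposition \ref{corresp}. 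That proposition already verifies that this map is a $\bQ$-opcoalgebra structure.

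In the converse direction, I would evaluate the opcoalgebra structure at the cofree coalgebras $\rF^{\bS}N = (L\times N, \Delta)$. For these $\beta_1(b,n) = b$, so the component at $\rF^{\bS}N$ is a map $(L\times N)/G \to (G\tilde{\times}L\times N)/G$. Composing it with $\tilde{\rM}(G\tilde{\times}\varepsilon_N)$ recovers $[b,n]\mapsto[(h(b), h(b)^{-1}n)] = \lambda_N$. Checking that the two assignments are mutually inverse then reduces to naturality together with the coalgebra counit identity $\varepsilon\circ\beta = \mathrm{id}$.

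The main obstacle is matching conventions. Specifically, I need to confirm that the distributive law $\psi$ induced in Theorem \ref{helpful} really is $\tilde\chi$, so that $\psi^{-1} = \chi$ rather than a twisted variant. I would do this by computing $\psi$ from the mate formula with $\bar a = 1$, relying on the uniqueness proposition to pin it down. Once that identification is made, the rest consists of routine orbit computations.
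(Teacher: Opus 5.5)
Your proposal is correct and follows essentially the same route as the paper. The paper's proof likewise forms $\tilde{\rM}\tilde{\Lambda}^{-1}\circ\lambda\rV\circ\tilde{\rM}\rV\eta^{\rF^{\bS}\rV}$, where $\tilde{\Lambda}$ is the identity, giving $[x]\mapsto[(h(\beta_1(x)),h(\beta_1(x))^{-1}x)]$ and appealing to Proposition~\ref{corresp}; conversely it recovers $\lambda$ as $\tilde{\rM}\rT\varepsilon\circ\tilde{\rM}\tilde{\Lambda}\rF^{\bS}\circ\tilde{\rM}\tilde{\nabla}\rF^{\bS}$ on cofree coalgebras. Your explicit check of the hypotheses of Theorem~\ref{helpful}, the identification $\psi=\tilde{\chi}$ (immediate from the uniqueness proposition, so $\psi^{-1}=\chi$), and the naturality argument that the two assignments are mutually inverse make explicit what the paper leaves implicit.
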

\begin{proof}
	Given that $\lambda$ is left $\chi$-coalgebra we can construct the morphism $$\tilde{\rM}\tilde{\nabla} = \tilde{\rM}\tilde{\Lambda}^{-1} \circ \lambda \rV \circ \tilde{\rM}\rV\eta^{\rF^\bS \rV}$$ as follows:
	\begin{equation} \label{eqnab}
	\xymatrix{\tilde{\rM}\rV(X, \beta) \ar[rr]^{\tilde{\rM}\rV\eta^{\rF^\bS \rV}_{(X, \beta)}} & & \tilde{\rM}\rV\rF^{\bS} \rV (X,\beta) \ar[r]^{\lambda_{\rV (X, \beta)}} & \tilde{\rM}\rT\rV(X,\beta ) \ar[r]^{\tilde{\rM}\tilde{\Lambda}^{-1}_{(X, \beta)}} & \tilde{\rM}\rV\rQ(X, \beta)
			}
	\end{equation}
	
	where $\tilde{\Lambda}^{-1}$ is the colax isomorphism from the comonad $\bQ$ on $\xGSET$ to the comonad $\bT$ on $\GSET$. From equation (\ref{eqnab}) we have 
	\begin{align}
		\xymatrix{[x] \ar@{|->}[r] & [(\beta_1(x), x)] \ar@{|->}[r] & [(h(\beta_1(x)), h(\beta_1(x))^{-1}x)] \ar@{|->}[r] & [(h(\beta_1(x)), h(\beta_1(x))^{-1}x)] }.
	\end{align}
	and by Proposition \ref{corresp} $(\tilde{\rM}\rV(X, \beta),\tilde{\rM}\tilde{\nabla})$ is an $\tilde{\rM}\rV\bQ$-coalgebra. 
	
	Conversely, let $(\tilde{\rM}\rV, \tilde{\rM}\tilde{\nabla})$ be any $\bQ$-opcoalgebra on $\tilde{\rM}\rV$ and define the natural transformation $\lambda : \tilde{\rM}\rV\rF^{\bS} \Rightarrow \tilde{\rM}\rT$ with  
	$ \lambda = \tilde{\rM}\rT\varepsilon \circ \tilde{\rM}\tilde{\Lambda}F^{\bS} \circ \tilde{\rM}\tilde{\nabla}\rF^{\bS}
	$. That is, for any $G$-set $X$, we obtain
	$$
	\xymatrix{\tilde{\rM}\rV\rF^{\bS}X \ar[rr]^{\tilde{\rM}\tilde{\nabla}\rF^{\bS}X} & & \tilde{\rM}\rV\rQ\rF^\bS X \ar[r]^{\tilde{\rM}\tilde{\Lambda}_{\rF^{\bS}X}} & \tilde{\rM}\rT\rV\rF^{\bS}X \ar[r]^{\;\;\;\;\;\tilde{\rM}\rT\varepsilon_X} & \tilde{\rM}\rT X
	}
	$$ 
	
	which gives us
	\begin{equation}
		\xymatrix{[(l, x)] \ar@{|->}[r] & [(h(l), h(l)^{-1}l, h(l)^{-1}x)]) \ar@{|->}[r] & [(h(l), h(l)^{-1}l, h(l)^{-1}x)] \ar@{|->}[r] & [(h(l), h(l)^{-1}x)]
	}
			\end{equation}
			which is a left $\chi$-coalgebra by \cite[Proposition 5.5]{Boi25}.
	\end{proof}
	\section{The duplicial operator and crossed $G$-sets}

	Here we recall the simplicial set $(\bar{C}(G,\{*\}, N, d, s, t)$ in \cite[Section 3]{Boi25} which has as $n$-simplices $
	\bar{C}_n(G, \{*\}, N):=(\{*\} \times
	G^{\times(n+1)} \tilde{\times} N)/G, $ 
	with face and 
	degeneracy operators as 
	\begin{align*} 
		d_i([*, g_1, \ldots,
		g_{n+1},
		w]) & :=
		\begin{cases} 
			[*, g_2, \ldots, g_{n+1}, w] & \mbox{if } i = 0 \\
			[*,g_1, \ldots, g_{i+1}g_{i+2},
			\ldots, g_{n+1}, w] & \mbox{if } 
			0 < i < n \\  [*, g_1, \ldots ,
			g_{n}, g_{n+1}w] & \mbox{if } i = n
		\end{cases}
		\\
		s_i([*, g_1,
		\ldots, g_{n+1},w]) & 
		:= [*, g_1, \ldots,
		g_i, 1, g_{i+1}, \ldots ,
		g_{n+1}, w]
	\end{align*}
	where $[*, g_1, \ldots,
	g_{n+1}, w] \in (\{*\} \times
	G^{\times (n+1)} \tilde{\times} N)/G$.
	For the sake of convenience, we recall the following identification
	$(\{*\} \times
	G^{\times (n+1)} \tilde{\times} N)/G \cong 
	G^{\times n} \tilde{\times} N$ via  
	
	\begin{equation}\label{identi} [*, g_1, \ldots, g_{n+1}, w] \mapsto ( g_2, \ldots, g_{n+1}, w)
	\end{equation}
	with inverse map given by 
	$$
	(g_1, \ldots, g_n, w) \mapsto [*, 1, g_1, \ldots, g_n, w].
	$$

	 This simplicial set is turned into a duplicial set by equipping it with a duplicial operator
	$$ t: \bar{C}(G, \{*\}, N) \rightarrow \bar{C}(G, \{*\}, N)$$  
	$$ t_n[*, g_1, \ldots, g_{n+1},w] = [*, \alpha(w), \alpha(w)(g_2\ldots g_{n+1})^{-1}, g_2, \ldots, g_n, g_{n+1} w]$$ for $n \in \mathbb{N}$. This operator $t_n = \lambda \times \mathrm{Id}^{n} \circ \chi^{n} \circ \mathrm{Id}^{n}\times \rho$  where $\alpha(w) = (h\circ f(w))^{-1}$ in \cite[Section 5, Theorem 5.7]{Boi25} uisng the B\"ohm-\c Stefan construction \cite{BS08}. In what follows we give a more general result by expanding on a remark \cite[Remark 4.7]{Boi25} by the first author. That is we consider the case where the $G$-set $N$ is nontrivial. 
	\begin{thm}
		Assume that $N$ is a $G$-set and $L$ is a faithful $G$-set. The duplicial operator 
		$
		t : \bar{C}(G, \{*\}, N) \rightarrow \bar{C}(G, \{*\}, N) 
		$ such that for $n \in \mathbb{N}$
		$$ t_n([*, g_1, \ldots, g_{n+1}, w]) = [*, h(u), (h(u))^{-1}(g_1, \ldots, g_n), g_{n+1}w,]
		$$
		where $u = g_1\cdot\ldots\cdot g_{n+1}f(w)$
		is cyclic if and only if $(h\circ f(w))^{-1} \in \mathrm{Stab}(w)$ and $(h\circ f(gw))^{-1} = g(h\circ f(w))^{-1}g^{-1}$. 
	\end{thm}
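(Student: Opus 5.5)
Since a duplicial set is cyclic exactly when $t_n^{n+1} = \mathrm{id}$ on $n$-simplices, I will compute $t_n^{n+1}$ explicitly and read off when it is the identity. First I will transport $t_n$ along the identification (\ref{identi}) of $\bar{C}_n(G,\{*\},N)$ with $G^{\times n}\tilde{\times}N$. This lets me work with representatives $(g_1,\ldots,g_n,w)$ and forget the orbit classes. The data are $f\colon N\rightarrow L$, the $G$-equivariant map behind the right $\chi$-coalgebra $\rho$ from \cite[Proposition 5.4]{Boi25}, and $h\colon L\rightarrow G$ from Proposition \ref{prop5}. The composite $\alpha := (h\circ f)^{-1}\colon N\rightarrow G$ is then the natural candidate for a crossed $G$-set structure on $N$. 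Under the identification, $t_n$ should have the shape
$$(g_1,\ldots,g_n,w)\mapsto (\alpha(w)\,(g_1\cdots g_n)^{-1},\, g_1,\ldots,g_{n-1},\, g_n w),$$
possibly up to conjugation of the entries by $h(u)$. I will first pin down this normal form. I use equivariance of $f$ and $h$ to write $h(u)=g_1\cdots g_{n+1}\,h(f(w))$, and then absorb the leading group element into the orbit representative.

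Next I will iterate. Each application of $t_n$ shifts the $g_i$ one slot to the right and inserts a new first entry built from $\alpha$ of the current last coordinate. It also multiplies the last coordinate $w$ by the element that leaves the tuple. After $n+1$ steps every original $g_i$ has passed through the last slot. Tracking this, I expect
$$t_n^{n+1}(g_1,\ldots,g_n,w) = \bigl(c\,g_1c^{-1},\ldots,c\,g_nc^{-1},\, c\,w\bigr),$$
for a group element $c$ obtained from the product $g_1\cdots g_n$ and $\alpha(w)$, and essentially equal to $\alpha(w)$ once the crossed condition is used. The bookkeeping will be an induction on the number $k\le n+1$ of applications. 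The inductive claim is that $t_n^k$ produces the first $k$ new entries as $\alpha$ evaluated at partially translated copies of $w$, multiplied by inverses of partial products.

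The main obstacle is simplifying these $\alpha(g\cdots w)$ terms. I plan to show that the identity $\alpha(gw)=g\alpha(w)g^{-1}$ is exactly what collapses the iterate to the conjugation form above. For the converse, I will specialise to $n=0$ and $n=1$. For $n=0$, $t_0^1(w)=\alpha(w)w$, so $t_0=\mathrm{id}$ forces $\alpha(w)\in\mathrm{Stab}(w)$. For $n=1$, comparing $t_1^2(g,w)$ with $(g,w)$ and using the $n=0$ condition forces $\alpha(gw)=g\alpha(w)g^{-1}$, since the two sides differ precisely by $\alpha(gw)^{-1}g\,\alpha(w)g^{-1}$. I will use faithfulness of $L$ here, to cancel elements acting trivially and so conclude equality in $G$ rather than merely equality of actions.

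For sufficiency, I will substitute both conditions into the general formula. The crossed condition turns $c$ into $\alpha(w)$ conjugating everything. The stabiliser condition, together with the facts that $\alpha(w)$ commutes with the product and fixes $w$, then makes the resulting tuple equal to the original one in the orbit set. Hence $t_n^{n+1}=\mathrm{id}$ for all $n$.
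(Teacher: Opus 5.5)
Your necessity argument is fine. The paper reads off both conditions from the two coordinates of $t_1^2$, while you use $t_0$ for the stabiliser condition; either works.

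The gap is in sufficiency. The normal form you expect, $t_n^{n+1}(g_1,\ldots,g_n,w)=(cg_1c^{-1},\ldots,cg_nc^{-1},cw)$ with $c=\alpha(w)$ under the crossed condition, is not what the iteration produces. The closing step would also fail. Nothing in the hypotheses makes $\alpha(w)$ commute with $g_1\cdots g_n$. Even if it did, conjugating the individual entries $g_i$ is not undone in $G^{\times n}\tilde{\times}N$: the orbit relation only moves the first entry, which the identification has already normalised to $1$.

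In fact a conjugation form would contradict the theorem. Take $G$ nonabelian, $N=G$ with $g\cdot w=gwg^{-1}$, $L=G$ with left multiplication, $h=\mathrm{id}$ and $f(w)=w^{-1}$. Then $\alpha(w)=w$ satisfies both conditions, and directly
$t_1^2(g,w)=(\alpha(gwg^{-1})\,g\,\alpha(w)^{-1},\ \alpha(w)\cdot w)=(g,w)$, whereas your formula would give $(wgw^{-1},w)$.

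The missing observation is that, under the crossed condition, $t_n$ is a plain cyclic rotation. For a state $(x_1,\ldots,x_n,v)$, the operator $t_n$ inserts in front the entry $x_0:=\alpha(v)(x_1\cdots x_n)^{-1}$, so that $x_0x_1\cdots x_n=\alpha(v)$. It also drops $x_n$ and replaces $v$ by $x_nv$. Applied to the new state $(x_0,\ldots,x_{n-1},x_nv)$, it inserts $\alpha(x_nv)(x_0\cdots x_{n-1})^{-1}=\alpha(x_nv)\,x_n\,\alpha(v)^{-1}$, which equals $x_n$ by the crossed condition.

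So, by induction, $t_n$ rotates the word $(x_0,x_1,\ldots,x_n)$ one step and multiplies the last coordinate by the letter that leaves. After $n+1$ steps the word is back to $(a_0,g_1,\ldots,g_n)$, where $a_0:=\alpha(w)(g_1\cdots g_n)^{-1}$. Meanwhile $w$ has become $a_0g_1\cdots g_nw=\alpha(w)w$. Hence $t_n^{n+1}(g_1,\ldots,g_n,w)=(g_1,\ldots,g_n,\alpha(w)w)$, and the stabiliser condition alone finishes; no commutation is needed. This is what the paper's proof does, in the form $h(u_i)^{-1}h(u_{i+1})=g_{i+1}$ with last coordinate $(h\circ f(w))^{-1}w$.

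Two smaller corrections. First, $f$ is only a set map (it comes from a coalgebra structure on the underlying set of $N$). If $f$ were $G$-equivariant you would get $\alpha(gw)=\alpha(w)g^{-1}$, and the crossed condition would force $G$ to be trivial. Only equivariance of $h$ is needed for $h(u)=g_1\cdots g_{n+1}h(f(w))$. Second, faithfulness of $L$ plays no role anywhere: the group entries are compared as elements of $G$, not through an action.
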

	\begin{proof}
		Assume that the duplicial operator $t$ is cyclic then for $n = 1$,
		$$t_1^{2}([*, g_1, g_2, w]) = [*, g_1, g_2,w]. 
		$$ 
		Now, 
		\begin{align*}
			t_1^2([*,g_1, g_2, w]) &= t_1([*, h(u_2), h(u_2)^{-1}g_1, g_2w]), \qquad	\\ & =([*, h(g_1f(g_2w)), (h(g_1f(g_2w)))^{-1} h((u_2), (h\circ f(w))^{-1}w] 
		\end{align*}
		where $u_2 = g_1g_2f(w)$. Following from the above identification (\ref{identi}) we obtain 
		$$
	((h\circ f)(g_2w))^{-1}g_2 (h\circ f)(w), ((h\circ f)(w))^{-1}w)  = (g_2, w)
		$$ 
		so that $(h\circ f(w))^{-1}w = w$. That is, $(h\circ f(w))^{-1} \in \mathrm{Stab}(w)$. We also obtain
		$$(h\circ f(g_2w))^{-1} = g_2h(f(w))^{-1} g_2^{-1}.
		$$
Conversely, suppose that $(h\circ f(w))^{-1} \in \mathrm{Stab}(w)$ and $(h\circ f(gw))^{-1} = g(h\circ f(w))^{-1}g^{-1}$. Then for any $n \in \mathbb{N}$, 

\begin{align*}
&t_{n}^{n}([*, g_1, \ldots, g_{n+1}, w])
\\&= [*, h(u_2), h(u_2)^{-1}h(u_3), \ldots, h(u_{n})^{-1}h(u_{n+1}), h(u_{n+1})^{-1}g_1, g_2\cdot\ldots \cdot g_{n+1}w  ]
\end{align*}
	where $u_{2} = g_1g_2f(g_3\cdot \ldots \cdot g_{n+1}w), \ldots,  u_{n+1} = g_1\cdot \ldots \cdot g_{n+1}f(w)$.
	By applying the operator $t_{n}$ once more we obtain:
\begin{align*}
	&t_{n}([*, h(u_2), h(u_2)^{-1}h(u_3), \ldots, h(u_{n})^{-1}h(u_{n+1}), h(u_{n+1})^{-1}g_1, g_2\cdot\ldots \cdot g_{n+1}w  ])
	\\&= [*, h(u_1), h(u_1)^{-1}h(u_2), \ldots, h(u_{n})^{-1}h(u_{n+1}), (h\circ f(w))^{-1}w].
\end{align*}
where $u_1 = g_1f(g_2\cdot \ldots \cdot g_{n+1}w)$.
 Since $(h\circ f(gw))^{-1} = g(h\circ f(w))^{-1}g^{-1}$, we observe that 
\begin{align*}
	h(u_1)^{-1}h(u_2)&=h(f(g_2\cdot \ldots \cdot g_{n+1}w))^{-1}g_2h(f(g_3\cdot \ldots \cdot g_{n+1}w)) \\&= g_2h(f(g_3\cdot \ldots \cdot g_{n+1}w))^{-1}h(f(g_3\cdot \ldots \cdot g_{n+1}w)) = g_2 \\
	\vdots \\
	h(u_n)^{-1}h(u_{n+1}) &= g_{n+1}(h\circ f(w))^{-1}(h\circ f(w)) = g_{n+1}.
\end{align*}	
By the identification (\ref{identi})
$$
[*, h(u_1), h(u_1)^{-1}h(u_2), \ldots, h(u_{n})^{-1}h(u_{n+1}), (h\circ f(w))^{-1}w] = [*, g_1, g_2, \ldots, g_{n+1}, w].
$$ Thus for every $n \in \mathbb{N}$, $t_n$ is cyclic.
	\end{proof}
	
We thus see that $N$ needs to be equipped with a crossed $G$-set structure in order to have $t$ to be cyclic.

\section{The comonad $\bar{\mathbb{T}}$ }\label{comonodT}
Here we discuss an adjunction between $\SET$ and $\xGSET$. We do this by first realising that we have an $\mathbb{S}$-comonadic triangle (as used by Brzezinski in \cite{brzezinski}
$$
\xymatrix{ \SET \ar[rr]^{K} \ar@<2.5pt>[dr]^{F} & & \xGSET \ar[dl]^{V} \\ 
	& \GSET \ar@<3.5pt>[ul]^{U} & }
$$

where 
$$K : \SET \rightarrow \xGSET, \quad X \mapsto (G \tilde{\times}X, \alpha), \quad \alpha : G\tilde\times X \rightarrow L \times G\tilde{\times} X$$ is a comparison functor and 
$$
V : \xGSET \rightarrow \GSET,\quad (X, \beta) \mapsto X.
$$
Indeed $VK = F = G \tilde{\times} -$. 
Next we classify all the morphisms $G\tilde{\times} - \rightarrow L\times -$ of comonads between $G\tilde{\times}-$ and $L \times -$.
\begin{prop}
	Let $X$ be a $G$-set and let $f : X \rightarrow L$ be a set map. Then the natural transformation
	$\varphi : G \tilde{\times} - \rightarrow L \times -$ defined by 
	$\varphi_X(g,x) = (gf(x), gx)$ is a morphism of comonads and all morphisms of comonads are of this form.
\end{prop}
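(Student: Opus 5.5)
The plan is to translate the comonad-morphism axioms into conditions on the restriction of $\varphi_X$ to $\{1\}\times X$, using the free–forgetful bijection $\theta,\xi$ of (\ref{madina})–(\ref{santo}). Here $\varphi$ is to be a natural transformation $\rT \Rightarrow \rS$ whose components are $G$-equivariant. It must satisfy the counit condition $\varepsilon_X\circ\varphi_X=\tilde\varepsilon_X$ and the comultiplication condition
$$\Delta_X\circ\varphi_X=\varphi_{\rS X}\circ \rT\varphi_X\circ\tilde\Delta_X .$$
The family $f=(f_X)$ is read as a family of set maps $f_X\colon X\to L$, natural in $X$ along $G$-equivariant maps. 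I split the argument into the direct part (the displayed $\varphi$ is a morphism) and the converse (every morphism has this form).

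\emph{Direct part.} First, $\varphi_X(g,x)=(gf(x),gx)$ is exactly $\xi(\bar\varphi)$ for $\bar\varphi(x)=(f(x),x)$, so it is $G$-equivariant by construction. Naturality in $X$ follows from the naturality of $f$. The counit condition is immediate: $\varepsilon_X\varphi_X(g,x)=gx=\tilde\varepsilon_X(g,x)$.

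For comultiplication I compute both sides on $(g,x)$:
$$\Delta_X\varphi_X(g,x)=(gf(x),gf(x),gx),$$
while
$$\varphi_{\rS X}\,\rT\varphi_X\,\tilde\Delta_X(g,x)=\varphi_{\rS X}(g,(f(x),x))=\bigl(gf_{L\times X}(f(x),x),\,gf(x),\,gx\bigr).$$
Hence the condition reduces to $f_{L\times X}(f(x),x)=f_X(x)$. This follows from naturality of $f$ applied to the $G$-equivariant projection $\varepsilon_X\colon L\times X\to X$, which gives $f_{L\times X}(l,x)=f_X(x)$.

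\emph{Converse.} Let $\varphi$ be any morphism of comonads. By the bijection $\theta$, $\varphi_X$ is determined by $\theta(\varphi_X)(x)=\varphi_X(1,x)$, which I write as $(f(x),k(x))$. Then $\varphi_X(g,x)=(gf(x),gk(x))$. The counit condition at $g=1$ gives $k(x)=x$, so $\varphi_X$ has the stated form. Naturality of $\varphi$ in $X$ transfers directly to naturality of $f$, since $\theta$ is natural.

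The step I expect to be the main obstacle is precisely this naturality bookkeeping: making explicit what "set map $f$" means uniformly in $X$, and checking that the comultiplication axiom adds no constraint beyond naturality. If one only assumed $f$ arbitrary per $X$, the identity $f_{L\times X}(f(x),x)=f(x)$ would have to be imposed separately. I would therefore first fix the convention that $f$ is natural with respect to $G$-equivariant maps. I would then verify the identity through the projection $\varepsilon_X$ as above.
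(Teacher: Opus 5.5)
Your proposal is correct and follows the paper's argument. Equivariance (your bijection $\theta$) reduces $\varphi_X$ to the values $\varphi_X(1,x)$, and the counit pins the second component to $x$. The paper dismisses the direct part as ``straightforward''; you carry it out and correctly isolate the needed condition $f_{L\times X}(f(x),x)=f_X(x)$, deriving it from naturality along $\varepsilon_X\colon L\times X\to X$.

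Your naturality convention is stronger than you use. Applied to the unique $G$-map $X\to\{*\}$, it forces every $f_X$ to be constant at a single $l_0\in L$, so the comonad morphisms $\rT\Rightarrow\rS$ are exactly $\varphi_X(g,x)=(gl_0,gx)$.
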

\begin{proof}
	It is straightforward to verify that $\varphi$ is a morphism of comonads, that is, the following diagrams 
	$$
	\xymatrix{G\tilde{\times}X \ar[dd]_{\tilde{\Delta}_X} \ar[rr]^{\varphi_X} & & L \times X \ar[dd]^{\Delta_X} \\ \\
		G\tilde{\times}G\tilde{\times} X \ar[rr]_{L\times \varphi_X \circ \varphi_{G\tilde{\times}X}} &  & L\times L \times X} 
	\quad \xymatrix{ G\tilde{\times} X \ar[ddrr]_{\tilde{\varepsilon}_X}\ar[rr]^{\varphi_X} & & L\times X \ar[dd]^{\varepsilon_X} \\ \\ & & X }
	$$
	Assume now that $\varphi_X : G\tilde{\times} X \rightarrow L \times X$ is a morphism of comonads and define $\varphi_1, \varphi_2$ by 
	$$
	\varphi_X(g,x):= (\varphi_1(g,x), \varphi_2(g, x)).
	$$
	As $\varphi_X$ is $G$-equivariant, 
	$$\varphi_X(g,x) = \varphi_X(g(1, x)) = g\varphi_X(1, x)= g(\varphi_1(1, x), \varphi_2(1, x))= (g\varphi_1(1, x), g\varphi_2(1,x))
	$$
	Thus $\varphi_1(g, x) = g\varphi_1(1, x) = gf(x)$ where $f : X \rightarrow L$ is any set map. By considering the counit compatibility condition we obtain $\varphi_2(g,x) = gx$. Thus 
	$$
	\varphi_X(g, x) = (gf(x), gx).
	$$
\end{proof}

The following is an instance of the results in \cite{brzezinski}
\begin{prop}
	Fix the categories $\SET, \GSET$, the comonad $\mathbb{S}$ on $\GSET$ and adjoint functors $G\tilde{\times}- : \SET \rightleftarrows \GSET$. There is a one-to-one correspondence between the comparison functor $\rK :\SET \rightarrow \xGSET$ in the comonadic triangle made of $L\times -$, $ G\tilde{\times}-$ and $\rU$, and the morphisms of comonads $\varphi : G\tilde{\times} - \rightarrow L \times -$.
\end{prop}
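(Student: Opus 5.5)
We construct the two assignments explicitly and check that they are mutually inverse. This is the standard correspondence between comparison functors and comonad morphisms, specialised to the comonadic triangle $\SET \to \xGSET \to \GSET$.

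\emph{From a comparison functor to a comonad morphism.} Let $\rK$ be a comparison functor, meaning $\rV\rK = \rF = G\tilde{\times}-$. For a set $X$ we then have $\rK X = (G\tilde{\times}X, \alpha_X)$, where $\alpha_X \colon G\tilde{\times}X \to L\times G\tilde{\times}X$ is an $\bS$-coalgebra structure. By the description of $\xGSET$, the map $\alpha_X$ has the form $\alpha_X(g,x) = (\beta_1(g,x),(g,x))$ for a $G$-equivariant $\beta_1$. Functoriality of $\rK$ over $\rV$ forces $\alpha$ to be natural in $\rF$-images of set maps.

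For a $G$-set $Y$ we define $\varphi_Y := \rS\tilde{\varepsilon}_Y \circ \alpha_{\rU Y} \colon G\tilde{\times}Y \to L\times Y$. In the other direction, a comonad morphism $\varphi$ gives $\alpha_X := \varphi_{\rF X}\circ \tilde{\Delta}_X$, the composite $G\tilde{\times}X \to G\tilde{\times}G\tilde{\times}X \to L\times G\tilde{\times}X$, and we set $\rK X := (\rF X, \alpha_X)$. In both directions the key steps are the following.
\begin{enumerate}
\item Show that $\alpha_X$ is coassociative and counital. This follows from the two comonad morphism diagrams for $\varphi$ together with the coassociativity of $\tilde{\Delta}$.
\item Show that $\varphi$ satisfies the comonad morphism diagrams. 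This uses the coalgebra axioms for $\alpha$ and its naturality with respect to the counit $\tilde\varepsilon$. The point is that $\tilde{\varepsilon}_Y$ is a morphism $\rF\rU Y \to Y$ only in $\GSET$, so we need that $\rK$ sends the counit to an $\bS$-coalgebra map. I expect this step to be the main obstacle.
\end{enumerate}
The obstacle in step 2 is conceptual: a comparison functor only controls maps of the form $\rF(u)$. I would therefore first work concretely, using the classification in the preceding Proposition. Every comonad morphism has the form $\varphi_Y(g,y) = (gf(y), gy)$ for a set map $f \colon Y \to L$. Likewise, equivariance of $\beta_1$ gives $\beta_1(g,x) = g\,k(x)$ for some set map $k \colon X \to L$, and naturality in $X$ says that $k$ is natural in the set $X$.

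Under these descriptions the two assignments become $f \mapsto k = f|_{\rU}$ and $k \mapsto \varphi_Y(g,y) = (g\,k(y), gy)$. Both composites are then identities by direct computation. For instance,
$\rS\tilde\varepsilon\circ\varphi_{\rF X}\circ\tilde{\Delta}_X(g,x) = (g f(1,x), gx)$, evaluated in the appropriate form. I expect the main subtlety to be the precise bookkeeping of which $f$ (on $\rU Y$ versus on $Y$) is meant. Since the paper allows $f$ to be an arbitrary set map, I would index both sides by families of set maps to $L$ and identify them.

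Finally, I would verify the two round trips:
\[
\varphi \mapsto \rK \mapsto \rS\tilde{\varepsilon}\circ\varphi_{\rF\rU}\circ\tilde{\Delta}_{\rU} = \varphi ,
\]
which follows from naturality of $\varphi$ and the counit law $\tilde\varepsilon\rF\circ\rF\eta = \mathrm{id}$, and
\[
\rK \mapsto \varphi \mapsto \rK ,
\]
which follows from $\rK$ being a coalgebra lift together with the triangle identities.

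The routine checks are the commutation of the four diagrams, which I would do elementwise on pairs $(g,x)$.
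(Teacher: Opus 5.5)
Your proposal is correct and follows the paper's route: the paper also sets $\mathfrak{u}_Y=\varphi_{\rF Y}\circ\rF\eta_Y$ and $\rK Y=(\rF Y,\mathfrak{u}_Y)$, and conversely $\varphi=\rS\tilde\varepsilon\circ\mathfrak{u}\rU$, reading off the explicit form of $\varphi$ and appealing to the preceding classification; your round-trip checks go beyond what the paper writes down. The step you flag as the main obstacle is not one, so you need not fall back on the concrete classification: $\Delta\circ\rS\tilde\varepsilon\circ\alpha\rU=\rS\rS\tilde\varepsilon\circ\rS\alpha\rU\circ\alpha\rU$ by naturality of $\Delta$ and coassociativity of $\alpha$, and $\varphi\rT\circ\rF\eta\rU=\rS\tilde\varepsilon\rF\rU\circ\rS\rF\eta\rU\circ\alpha\rU=\alpha\rU$ by naturality of $\alpha$ at the set map $\eta_{\rU Y}$ together with $\tilde\varepsilon\rF\circ\rF\eta=\mathrm{id}$, whence $\rS\varphi\circ\varphi\rT\circ\tilde\Delta=\Delta\circ\varphi$ with no naturality of $\alpha$ at $\tilde\varepsilon$ required (incidentally, that triangle identity is the one needed here, whereas your round trip $\varphi\mapsto\rK\mapsto\varphi$ actually uses $\rU\tilde\varepsilon\circ\eta\rU=\mathrm{id}$).
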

\begin{proof}
	Suppose that $\varphi : \rT \rightarrow  \rS$ is a morphism of comonads on $\GSET$. We define a natural transformation 
	$
	\mathfrak{u}: \rF \rightarrow \rS\rF
	$ such that for any set $Y$
	$$
	\mathfrak{u}_Y:G \tilde{\times}Y \rightarrow L \times G\tilde{\times}Y, \quad (g, y)\mapsto (gf(y), g, y)
	$$
	where $\mathfrak{u}_Y = \varphi_{G\tilde{\times}Y} \circ G\tilde{\times}\eta$ with $\eta$ being the unit of the adjunction between $\SET$ and $\GSET$. Then the functor $\rK : \SET \rightarrow \xGSET$ is given by $ Y \mapsto (G\tilde{\times}Y, \mathfrak{u}_Y)$. It is straightforward to verify that $\mathfrak{u}_Y$ is an $\mathbb{S}$-coalgebra map.
	Conversely given $\rK : Y \rightarrow (G\tilde{\times}Y, \beta^{G\tilde{\times}Y})$ define 
	$
	\mathfrak{u} : G\tilde{\times}- \Rightarrow L \times G\tilde{\times}$ by $\mathfrak{u}_Y = \beta^{G\tilde{\times} Y}$ where we have  $\beta^{G\tilde{\times}Y}(g, y) = (g\alpha_1(y), g, y)$. Then $\varphi : G\tilde{\times}- \Rightarrow L\times -$ is equal to the composite $\rS\tilde{\varepsilon} \circ \mathfrak{u} \rU$. That is 
	$$ \varphi_Y (g, y) = (g\alpha_1(y), gy).$$
	Thus $\varphi$ is a morphism of comonads.
\end{proof}

Since $\SET$ is complete, it has equalisers and by \cite[Proposition 1.5]{brzezinski} 
$\rK$ has a right adjoint $\rD : \xGSET \rightarrow \SET$ which sends any $\bS$-coalgebra $(X, \beta)$ to the equaliser of the pair of maps $\mathfrak{l}_X = \rU\varphi_X \circ \eta \rU (X) : X \rightarrow L \times X$ and $\beta:  X\rightarrow L \times X$. We see that $\mathfrak{l}_X(x) = \varphi_X(1, x) = (f(x), x)$. Meanwhile $\beta(x) = (\beta_1(x), x)$. Thus  
$$
\rD (X, \beta):= \{x \in X : f(x) = \beta_1(x), f, \beta_1 : X \rightarrow L  \} = \{x \in X \colon \varphi_X(g,x)  = g\beta(x)\}.
$$ 
We write down explicitly the adjunction that exists between these two functors.
\begin{prop}
	Let $\rK : \SET \rightarrow \xGSET$ be the comparison functor in the comonadic triangle made of $L \times -, G\tilde{\times}-$ and $\rU$. Then
	\begin{align*} 
	\Pi : \mathrm{Hom}_{\xGSET}(\rK Y, (X, \beta)) &\rightarrow \mathrm{Hom}_{\SET}(Y, \rD (X, \beta)) \\ \Pi(h)(y)&:= h(1,y)
	\end{align*}
	is a natural bijection with inverse 
	\begin{align*}
		\Theta: \mathrm{Hom}_{\SET}(Y, \rD (X, \beta)) &\rightarrow \mathrm{Hom}_{\xGSET}(\rK Y, (X, \beta))
		\\ \Theta(q)(g,x)&:= gq(x)
		\end{align*}
	\end{prop}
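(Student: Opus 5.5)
The plan is to check directly that $\Pi$ and $\Theta$ land in the claimed hom-sets, that they are mutually inverse, and that they are natural. Throughout I use that $\rK Y=(G\tilde{\times}Y,\mathfrak{u}_Y)$ with $\mathfrak{u}_Y(g,y)=(gf(y),g,y)$. Here $f$ is the map determining the comonad morphism $\varphi$, so it may be evaluated on $Y$ via the unit. I also use that $\rD(X,\beta)=\{x\in X : f(x)=\beta_1(x)\}$. The formula for $\Theta$ in the statement should be read as $\Theta(q)(g,y):=gq(y)$, since the input of $\Theta(q)$ lies in $G\tilde{\times}Y$.

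\emph{Well-definedness of $\Pi$.} Let $h\colon \rK Y\to (X,\beta)$ be a morphism of $\bS$-coalgebras, so that $\beta\circ h=(L\times h)\circ\mathfrak{u}_Y$. Evaluating at $(1,y)$ gives
\[
(\beta_1(h(1,y)),h(1,y))=(f(y),h(1,y)).
\]
Hence $\beta_1(h(1,y))=f(y)$. To conclude $h(1,y)\in\rD(X,\beta)$ one needs $f(h(1,y))=\beta_1(h(1,y))$. This is the compatibility between $f$ on $Y$ and $f$ on $X$ coming from naturality of $\varphi$, applied along the set map $y\mapsto h(1,y)$. I expect this step to be the main obstacle: $f$ must be treated as a natural family $f_X\colon X\to L$ with $f_X\circ k=f_Y$ for maps $k$, which the paper leaves implicit. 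I would make this naturality explicit first and then conclude.

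\emph{Well-definedness of $\Theta$.} For $q\colon Y\to\rD(X,\beta)$, the map $\Theta(q)(g,y)=gq(y)$ is clearly $G$-equivariant. It is an $\bS$-coalgebra map because
\[
\beta(gq(y))=(g\beta_1(q(y)),gq(y))=(gf(q(y)),gq(y)).
\]
This equals $(L\times\Theta(q))(\mathfrak{u}_Y(g,y))=(gf(y),gq(y))$, again using $f(q(y))=f(y)$ by naturality of $f$ together with $q(y)\in\rD(X,\beta)$.

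\emph{Mutual inverses and naturality.} First, $\Pi\Theta(q)(y)=1\cdot q(y)=q(y)$. Second, equivariance gives $\Theta\Pi(h)(g,y)=gh(1,y)=h(g,y)$. Naturality in $Y$ and in $(X,\beta)$ is routine: precomposition with $\rK(k)=G\tilde{\times}k$ and postcomposition with a coalgebra morphism both commute with evaluation at $(1,-)$.
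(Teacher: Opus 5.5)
Your proposal is correct and is essentially the paper's proof. You evaluate the coalgebra-morphism condition at $(1,y)$ and use naturality of $\varphi$ to show that $\Pi$ and $\Theta$ are well defined, and your inverse computations $\Pi\Theta(q)=q$, $\Theta\Pi(h)=h$ match the paper's. The step you defer is done in the paper with the naturality square of $\varphi$ at the $G$-equivariant maps $h\colon G\tilde{\times}Y\to X$ and $\Theta(q)$, not at the set map $y\mapsto h(1,y)$, which is not a morphism of $\GSET$. This gives $f(h(1,y))=\tilde f(1,y)$ and $f(q(y))=\tilde f(1,y)$ directly.
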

\begin{proof}
	Our goal in the first part of the proof is to show that $\Pi(h)(y)$ lands in $\rD (X, \beta)$. 
A map h: $\rK Y \coloneq (G\tilde{\times}Y, \varphi_{G\tilde{\times} Y} \circ G\tilde{\times \eta}) \rightarrow (X, \beta)$ is a  morphism of $\bS$-coalgebras. Thus $(\beta_1(h(1, y)) = \tilde{f}(1,y)$ where $\tilde{f}: G\tilde{\times} Y \rightarrow L$ is a set map with $\varphi_{G\tilde{\times}Y}(g_1, g_2, y) = (g_1\tilde{f}(g_2, y), g_1g_2, y)$.
Since $\varphi$ is a natural transformation, then for the $G$-equivariant map $h: G\tilde{\times} Y \rightarrow X$ we obtain the following commutative diagram:
$$
\xymatrix{ G\tilde{\times}G \tilde{\times} Y \ar[d]_{G\tilde{\times} h} \ar[r]^{\varphi_{G\tilde{\times} Y}} & L \times  G\tilde{\times} Y \ar[d]^{L \times h} \\
	G\tilde{\times} X \ar[r]_{\varphi_X} &  L \times X}
$$
from which we obtain $f(h(g_2, y)) = \tilde{f}(g_2, y)$. In particular, if $g_2 = 1$, then $f(h(1, y)) = \tilde{f}(1, y)$. We can now say that $h(1, y)$ is an element in the equaliser of the pair of maps $f, \beta_1$ since 
\begin{align*}
\varphi_X(g, h(1, y)) &= (gf(h(1, y)), gh(1, y)) = (g\tilde{f}(1, y), g(h(1, y))\\& = (g\beta_1(h(1, y)), gh(1,y))
= g\beta(h(1,y)). 
\end{align*}
Thus the map $\Pi$ is well defined. Next we show that the map $\Theta$ is also well defined. Indeed $\Theta(q): \rK Y \rightarrow (X, \beta)$ is $G$-equivariant as $\Theta(q)(ag, x)= agq(x) = a\Theta(q)(g, x)$. So by the naturality of $\varphi$, the following diagram commutes:
$$
\xymatrix{ G\tilde{\times}G \tilde{\times} Y \ar[d]_{G\tilde{\times} \Theta(q)} \ar[r]^{\varphi_{G\tilde{\times} Y}} & L \times  G\tilde{\times} Y \ar[d]^{L \times \Theta(q)} \\
	G\tilde{\times} X \ar[r]_{\varphi_X} &  L \times X}
$$
so that for $(g_1, g_2, y) \in G\tilde{\times} G\tilde{\times} Y$ we obtain $f(g_2q(y)) = \tilde{f}(g_2,y)$ and in particular $f(q(y)) = \tilde{f}(1, y)$. Next, $\Theta(q) : KY \rightarrow (X, \beta)$ is a morphism of $\bS$-coalgebras. Indeed $\beta \circ \Theta(q) = L \times \Theta(q) \circ \varphi_{G\tilde{\times} Y} \circ G\tilde{\times} \eta$ since 
\begin{align*}
	\beta \circ \Theta(q)(g, y) &= (g\beta_1(q(y), gq(y)) = g\beta(q(y)) \\&= \varphi_X(g, q(y)) = (gf(q(y), gq(y)))
	\\ L \times \Theta(q) \circ \varphi_{G\tilde{\times} Y} \circ G\tilde{\times} \eta (g, y) &= L \times \Theta(q) \circ \varphi_{G\tilde{\times} Y}(g, 1, y) \\&= (g\tilde{f}(1, y), gq(y)) = (gf(q(y)), gq(y)).
	\end{align*}
	Finally, 
	\begin{align*}
		(\Theta \circ \Pi(h))(g, y) &= g\Pi(h)(y) = gh(1, y) = h(g, y) \\ 
		(\Pi \circ \Theta(q))(y) & = \Theta(q)(1, y) = q(y).
	\end{align*}	
	
	Thus $\Theta \circ \Pi = \mathrm{Id}_{\mathrm{Hom}_{\xGSET}}$ and $\Pi \circ \Theta = \mathrm{Id}_{\mathrm{Hom}_{\SET}}$.
\end{proof}
Since the unit and counit of the  adjunction $\rK \colon \xGSET \rightleftarrows \SET \colon \rD$ are not isomorphisms the above theory does not apply to Corollary 3.3 in \cite{BBK25}.
	
	\bibliographystyle{plain}
		\bibliography{BTwu.bib}

@article{Boi25,
 author = {Boiquaye, John},
 title = {Cyclic sets from the nerve of a group and the {B{\"o}hm}-{{\c{S}}tefan} construction},
 fjournal = {Theory and Applications of Categories},
 journal = {Theory Appl. Categ.},
 issn = {1201-561X},
 volume = {44},
 pages = {181--195},
 year = {2025},
 language = {English},
 url = {www.tac.mta.ca/tac/volumes/44/5/44-05abs.html},
 zbMATH = {7977824},
 Zbl = {1570.18010}
}

@misc{BBK25,
 author = {Ivan Bartulovi{\'c} and John Boiquaye and Ulrich Kr{\"a}hmer},
 title = {Duplicial functors, descent categories and generalized {Hopf} modules},
 year = {2025},
 howpublished = {Preprint, {arXiv}:2501.14561 [math.{CT}] (2025)},
 url = {https://arxiv.org/abs/2501.14561},
 arXiv = {arXiv:2501.14561}
}

@article{BS08,
 author = {B{\"o}hm, Gabriella and {\c{S}}tefan, Drago{\c{s}}},
 title = {(Co)cyclic (co)homology of bialgebroids: {An} approach via (co)monads},
 fjournal = {Communications in Mathematical Physics},
 journal = {Commun. Math. Phys.},
 issn = {0010-3616},
 volume = {282},
 number = {1},
 pages = {239--286},
 year = {2008},
 language = {English},
 doi = {10.1007/s00220-008-0540-3},
 zbMATH = {5373480},
 Zbl = {1153.18004}
}

@article{KKS15,
 author = {Kowalzig, Niels and Kr{\"a}hmer, Ulrich and Slevin, Paul},
 title = {Cyclic homology arising from adjunctions},
 fjournal = {Theory and Applications of Categories},
 journal = {Theory Appl. Categ.},
 issn = {1201-561X},
 volume = {30},
 pages = {1067--1095},
 year = {2015},
 language = {English},
 zbMATH = {6520703},
 Zbl = {1344.18003}
}

@book{Loday,
 author = {Loday, Jean-Louis},
 title = {Cyclic homology.},
 edition = {2nd ed.},
 fseries = {Grundlehren der Mathematischen Wissenschaften},
 series = {Grundlehren Math. Wiss.},
 issn = {0072-7830},
 volume = {301},
 isbn = {3-540-63074-0},
 year = {1998},
 publisher = {Berlin: Springer},
 language = {English},
 zbMATH = {1093754},
 Zbl = {0885.18007}
}

@misc{beck,
 author = {Beck, Jon},
 title = {Distributive laws},
 year = {1969},
 language = {English},
 howpublished = {Semin. {Triples} categor. {Homology} {Theory}, {ETH} 1966/67, {Lect}. {Notes} {Math}. 80, 119-140 (1969).},
 doi = {10.1007/bfb0083084},
 zbMATH = {3296291},
 Zbl = {0186.02902}
}

@article{brzezinski,
  title={Galois structures},
  author={Brzezinski, Tomasz and Janelidze, George and Maszczyk, Tomasz},
  journal={Lecture Notes on Noncommutative Geometry and Quantum Groups, Editor PM Hajac, notes by P. Witkowski, available at http://www. mimuw. edu. pl/\~{} pwit/toknotes/toknotes. pdf},
  year={2008}
}

@article{FreydYetter,
 author = {Freyd, Peter J. and Yetter, David N.},
 title = {Braided compact closed categories with applications to low dimensional topology},
 fjournal = {Advances in Mathematics},
 journal = {Adv. Math.},
 issn = {0001-8708},
 volume = {77},
 number = {2},
 pages = {156--182},
 year = {1989},
 language = {English},
 doi = {10.1016/0001-8708(89)90018-2},
 zbMATH = {4113553},
 Zbl = {0679.57003}
}

@incollection{kassel,
 author = {Kassel, Ch.},
 title = {Quantum groups},
 booktitle = {Algebra and operator theory. Proceedings of the colloquium, Tashkent, Uzbekistan, September 29--October 5, 1997},
 isbn = {0-7923-5094-4},
 pages = {213--236},
 year = {1998},
 publisher = {Dordrecht: Kluwer Academic Publishers},
 language = {Spanish},
 zbMATH = {1304483},
 Zbl = {1113.17304}
}

@article{Yoshida,
 author = {Yoshida, Tomoyuki},
 title = {Crossed {{\(G\)}}-sets and crossed {Burnside} rings},
 fjournal = {RIMS Kokyuroku},
 journal = {RIMS Kokyuroku},
 issn = {1880-2818},
 volume = {991},
 pages = {1--15},
 year = {1997},
 language = {English},
 zbMATH = {1373448},
 Zbl = {0933.19001}
}

@article{Wol73,
 author = {Wolff, Harvey},
 title = {V-localizations and {V}-monads. {II}},
 fjournal = {Pacific Journal of Mathematics},
 journal = {Pac. J. Math.},
 issn = {1945-5844},
 volume = {63},
 pages = {579--589},
 year = {1976},
 language = {English},
 doi = {10.2140/pjm.1976.63.579},
 zbMATH = {3539406},
 Zbl = {0346.18011}
}

@book{Weibel,
 author = {Weibel, Charles A.},
 title = {An introduction to homological algebra. 1st pbk-ed},
 edition = {1st pbk-ed.},
 fseries = {Cambridge Studies in Advanced Mathematics},
 series = {Camb. Stud. Adv. Math.},
 volume = {38},
 isbn = {0-521-55987-1},
 year = {1995},
 publisher = {Cambridge: Cambridge Univ. Press},
 language = {English},
 zbMATH = {841435},
 Zbl = {0834.18001}
}

%
	
\end{document}